\def\serieslogo@{} \def\@setcopyright{} \makeatother
\renewcommand*\env@matrix[1][c]{\hskip -\arraycolsep
  \let\@ifnextchar\new@ifnextchar
  \array{*\c@MaxMatrixCols #1}}
\definecolor{AliceBlue}{rgb}{0.94,0.97,1.00}
\definecolor{AntiqueWhite1}{rgb}{1.00,0.94,0.86}
\definecolor{AntiqueWhite2}{rgb}{0.93,0.87,0.80}
\definecolor{AntiqueWhite3}{rgb}{0.80,0.75,0.69}
\definecolor{AntiqueWhite4}{rgb}{0.55,0.51,0.47}
\definecolor{AntiqueWhite}{rgb}{0.98,0.92,0.84}
\definecolor{BlanchedAlmond}{rgb}{1.00,0.92,0.80}
\definecolor{BlueViolet}{rgb}{0.54,0.17,0.89}
\definecolor{CadetBlue1}{rgb}{0.60,0.96,1.00}
\definecolor{CadetBlue2}{rgb}{0.56,0.90,0.93}
\definecolor{CadetBlue3}{rgb}{0.48,0.77,0.80}
\definecolor{CadetBlue4}{rgb}{0.33,0.53,0.55}
\definecolor{CadetBlue}{rgb}{0.37,0.62,0.63}
\definecolor{CornflowerBlue}{rgb}{0.39,0.58,0.93}
\definecolor{DarkBlue}{rgb}{0.00,0.00,0.55}
\definecolor{DarkCyan}{rgb}{0.00,0.55,0.55}
\definecolor{DarkGoldenrod1}{rgb}{1.00,0.73,0.06}
\definecolor{DarkGoldenrod2}{rgb}{0.93,0.68,0.05}
\definecolor{DarkGoldenrod3}{rgb}{0.80,0.58,0.05}
\definecolor{DarkGoldenrod4}{rgb}{0.55,0.40,0.03}
\definecolor{DarkGoldenrod}{rgb}{0.72,0.53,0.04}
\definecolor{DarkGray}{rgb}{0.66,0.66,0.66}
\definecolor{DarkGreen}{rgb}{0.00,0.39,0.00}
\definecolor{DarkGrey}{rgb}{0.66,0.66,0.66}
\definecolor{DarkKhaki}{rgb}{0.74,0.72,0.42}
\definecolor{DarkMagenta}{rgb}{0.55,0.00,0.55}
\definecolor{DarkOliveGreen1}{rgb}{0.79,1.00,0.44}
\definecolor{DarkOliveGreen2}{rgb}{0.74,0.93,0.41}
\definecolor{DarkOliveGreen3}{rgb}{0.64,0.80,0.35}
\definecolor{DarkOliveGreen4}{rgb}{0.43,0.55,0.24}
\definecolor{DarkOliveGreen}{rgb}{0.33,0.42,0.18}
\definecolor{DarkOrange1}{rgb}{1.00,0.50,0.00}
\definecolor{DarkOrange2}{rgb}{0.93,0.46,0.00}
\definecolor{DarkOrange3}{rgb}{0.80,0.40,0.00}
\definecolor{DarkOrange4}{rgb}{0.55,0.27,0.00}
\definecolor{DarkOrange}{rgb}{1.00,0.55,0.00}
\definecolor{DarkOrchid1}{rgb}{0.75,0.24,1.00}
\definecolor{DarkOrchid2}{rgb}{0.70,0.23,0.93}
\definecolor{DarkOrchid3}{rgb}{0.60,0.20,0.80}
\definecolor{DarkOrchid4}{rgb}{0.41,0.13,0.55}
\definecolor{DarkOrchid}{rgb}{0.60,0.20,0.80}
\definecolor{DarkRed}{rgb}{0.55,0.00,0.00}
\definecolor{DarkSalmon}{rgb}{0.91,0.59,0.48}
\definecolor{DarkSeaGreen1}{rgb}{0.76,1.00,0.76}
\definecolor{DarkSeaGreen2}{rgb}{0.71,0.93,0.71}
\definecolor{DarkSeaGreen3}{rgb}{0.61,0.80,0.61}
\definecolor{DarkSeaGreen4}{rgb}{0.41,0.55,0.41}
\definecolor{DarkSeaGreen}{rgb}{0.56,0.74,0.56}
\definecolor{DarkSlateBlue}{rgb}{0.28,0.24,0.55}
\definecolor{DarkSlateGray1}{rgb}{0.59,1.00,1.00}
\definecolor{DarkSlateGray2}{rgb}{0.55,0.93,0.93}
\definecolor{DarkSlateGray3}{rgb}{0.47,0.80,0.80}
\definecolor{DarkSlateGray4}{rgb}{0.32,0.55,0.55}
\definecolor{DarkSlateGray}{rgb}{0.18,0.31,0.31}
\definecolor{DarkSlateGrey}{rgb}{0.18,0.31,0.31}
\definecolor{DarkTurquoise}{rgb}{0.00,0.81,0.82}
\definecolor{DarkViolet}{rgb}{0.58,0.00,0.83}
\definecolor{DeepPink1}{rgb}{1.00,0.08,0.58}
\definecolor{DeepPink2}{rgb}{0.93,0.07,0.54}
\definecolor{DeepPink3}{rgb}{0.80,0.06,0.46}
\definecolor{DeepPink4}{rgb}{0.55,0.04,0.31}
\definecolor{DeepPink}{rgb}{1.00,0.08,0.58}
\definecolor{DeepSkyBlue1}{rgb}{0.00,0.75,1.00}
\definecolor{DeepSkyBlue2}{rgb}{0.00,0.70,0.93}
\definecolor{DeepSkyBlue3}{rgb}{0.00,0.60,0.80}
\definecolor{DeepSkyBlue4}{rgb}{0.00,0.41,0.55}
\definecolor{DeepSkyBlue}{rgb}{0.00,0.75,1.00}
\definecolor{DimGray}{rgb}{0.41,0.41,0.41}
\definecolor{DimGrey}{rgb}{0.41,0.41,0.41}
\definecolor{DodgerBlue1}{rgb}{0.12,0.56,1.00}
\definecolor{DodgerBlue2}{rgb}{0.11,0.53,0.93}
\definecolor{DodgerBlue3}{rgb}{0.09,0.45,0.80}
\definecolor{DodgerBlue4}{rgb}{0.06,0.31,0.55}
\definecolor{DodgerBlue}{rgb}{0.12,0.56,1.00}
\definecolor{FloralWhite}{rgb}{1.00,0.98,0.94}
\definecolor{ForestGreen}{rgb}{0.13,0.55,0.13}
\definecolor{GhostWhite}{rgb}{0.97,0.97,1.00}
\definecolor{GreenYellow}{rgb}{0.68,1.00,0.18}
\definecolor{HotPink1}{rgb}{1.00,0.43,0.71}
\definecolor{HotPink2}{rgb}{0.93,0.42,0.65}
\definecolor{HotPink3}{rgb}{0.80,0.38,0.56}
\definecolor{HotPink4}{rgb}{0.55,0.23,0.38}
\definecolor{HotPink}{rgb}{1.00,0.41,0.71}
\definecolor{IndianRed1}{rgb}{1.00,0.42,0.42}
\definecolor{IndianRed2}{rgb}{0.93,0.39,0.39}
\definecolor{IndianRed3}{rgb}{0.80,0.33,0.33}
\definecolor{IndianRed4}{rgb}{0.55,0.23,0.23}
\definecolor{IndianRed}{rgb}{0.80,0.36,0.36}
\definecolor{LavenderBlush1}{rgb}{1.00,0.94,0.96}
\definecolor{LavenderBlush2}{rgb}{0.93,0.88,0.90}
\definecolor{LavenderBlush3}{rgb}{0.80,0.76,0.77}
\definecolor{LavenderBlush4}{rgb}{0.55,0.51,0.53}
\definecolor{LavenderBlush}{rgb}{1.00,0.94,0.96}
\definecolor{LawnGreen}{rgb}{0.49,0.99,0.00}
\definecolor{LemonChiffon1}{rgb}{1.00,0.98,0.80}
\definecolor{LemonChiffon2}{rgb}{0.93,0.91,0.75}
\definecolor{LemonChiffon3}{rgb}{0.80,0.79,0.65}
\definecolor{LemonChiffon4}{rgb}{0.55,0.54,0.44}
\definecolor{LemonChiffon}{rgb}{1.00,0.98,0.80}
\definecolor{LightBlue1}{rgb}{0.75,0.94,1.00}
\definecolor{LightBlue2}{rgb}{0.70,0.87,0.93}
\definecolor{LightBlue3}{rgb}{0.60,0.75,0.80}
\definecolor{LightBlue4}{rgb}{0.41,0.51,0.55}
\definecolor{LightBlue}{rgb}{0.68,0.85,0.90}
\definecolor{LightCoral}{rgb}{0.94,0.50,0.50}
\definecolor{LightCyan1}{rgb}{0.88,1.00,1.00}
\definecolor{LightCyan2}{rgb}{0.82,0.93,0.93}
\definecolor{LightCyan3}{rgb}{0.71,0.80,0.80}
\definecolor{LightCyan4}{rgb}{0.48,0.55,0.55}
\definecolor{LightCyan}{rgb}{0.88,1.00,1.00}
\definecolor{LightGoldenrod1}{rgb}{1.00,0.93,0.55}
\definecolor{LightGoldenrod2}{rgb}{0.93,0.86,0.51}
\definecolor{LightGoldenrod3}{rgb}{0.80,0.75,0.44}
\definecolor{LightGoldenrod4}{rgb}{0.55,0.51,0.30}
\definecolor{LightGoldenrodYellow}{rgb}{0.98,0.98,0.82}
\definecolor{LightGoldenrod}{rgb}{0.93,0.87,0.51}
\definecolor{LightGray}{rgb}{0.83,0.83,0.83}
\definecolor{LightGreen}{rgb}{0.56,0.93,0.56}
\definecolor{LightGrey}{rgb}{0.83,0.83,0.83}
\definecolor{LightPink1}{rgb}{1.00,0.68,0.73}
\definecolor{LightPink2}{rgb}{0.93,0.64,0.68}
\definecolor{LightPink3}{rgb}{0.80,0.55,0.58}
\definecolor{LightPink4}{rgb}{0.55,0.37,0.40}
\definecolor{LightPink}{rgb}{1.00,0.71,0.76}
\definecolor{LightSalmon1}{rgb}{1.00,0.63,0.48}
\definecolor{LightSalmon2}{rgb}{0.93,0.58,0.45}
\definecolor{LightSalmon3}{rgb}{0.80,0.51,0.38}
\definecolor{LightSalmon4}{rgb}{0.55,0.34,0.26}
\definecolor{LightSalmon}{rgb}{1.00,0.63,0.48}
\definecolor{LightSeaGreen}{rgb}{0.13,0.70,0.67}
\definecolor{LightSkyBlue1}{rgb}{0.69,0.89,1.00}
\definecolor{LightSkyBlue2}{rgb}{0.64,0.83,0.93}
\definecolor{LightSkyBlue3}{rgb}{0.55,0.71,0.80}
\definecolor{LightSkyBlue4}{rgb}{0.38,0.48,0.55}
\definecolor{LightSkyBlue}{rgb}{0.53,0.81,0.98}
\definecolor{LightSlateBlue}{rgb}{0.52,0.44,1.00}
\definecolor{LightSlateGray}{rgb}{0.47,0.53,0.60}
\definecolor{LightSlateGrey}{rgb}{0.47,0.53,0.60}
\definecolor{LightSteelBlue1}{rgb}{0.79,0.88,1.00}
\definecolor{LightSteelBlue2}{rgb}{0.74,0.82,0.93}
\definecolor{LightSteelBlue3}{rgb}{0.64,0.71,0.80}
\definecolor{LightSteelBlue4}{rgb}{0.43,0.48,0.55}
\definecolor{LightSteelBlue}{rgb}{0.69,0.77,0.87}
\definecolor{LightYellow1}{rgb}{1.00,1.00,0.88}
\definecolor{LightYellow2}{rgb}{0.93,0.93,0.82}
\definecolor{LightYellow3}{rgb}{0.80,0.80,0.71}
\definecolor{LightYellow4}{rgb}{0.55,0.55,0.48}
\definecolor{LightYellow}{rgb}{1.00,1.00,0.88}
\definecolor{LimeGreen}{rgb}{0.20,0.80,0.20}
\definecolor{MediumAquamarine}{rgb}{0.40,0.80,0.67}
\definecolor{MediumBlue}{rgb}{0.00,0.00,0.80}
\definecolor{MediumOrchid1}{rgb}{0.88,0.40,1.00}
\definecolor{MediumOrchid2}{rgb}{0.82,0.37,0.93}
\definecolor{MediumOrchid3}{rgb}{0.71,0.32,0.80}
\definecolor{MediumOrchid4}{rgb}{0.48,0.22,0.55}
\definecolor{MediumOrchid}{rgb}{0.73,0.33,0.83}
\definecolor{MediumPurple1}{rgb}{0.67,0.51,1.00}
\definecolor{MediumPurple2}{rgb}{0.62,0.47,0.93}
\definecolor{MediumPurple3}{rgb}{0.54,0.41,0.80}
\definecolor{MediumPurple4}{rgb}{0.36,0.28,0.55}
\definecolor{MediumPurple}{rgb}{0.58,0.44,0.86}
\definecolor{MediumSeaGreen}{rgb}{0.24,0.70,0.44}
\definecolor{MediumSlateBlue}{rgb}{0.48,0.41,0.93}
\definecolor{MediumSpringGreen}{rgb}{0.00,0.98,0.60}
\definecolor{MediumTurquoise}{rgb}{0.28,0.82,0.80}
\definecolor{MediumVioletRed}{rgb}{0.78,0.08,0.52}
\definecolor{MidnightBlue}{rgb}{0.10,0.10,0.44}
\definecolor{MintCream}{rgb}{0.96,1.00,0.98}
\definecolor{MistyRose1}{rgb}{1.00,0.89,0.88}
\definecolor{MistyRose2}{rgb}{0.93,0.84,0.82}
\definecolor{MistyRose3}{rgb}{0.80,0.72,0.71}
\definecolor{MistyRose4}{rgb}{0.55,0.49,0.48}
\definecolor{MistyRose}{rgb}{1.00,0.89,0.88}
\definecolor{NavajoWhite1}{rgb}{1.00,0.87,0.68}
\definecolor{NavajoWhite2}{rgb}{0.93,0.81,0.63}
\definecolor{NavajoWhite3}{rgb}{0.80,0.70,0.55}
\definecolor{NavajoWhite4}{rgb}{0.55,0.47,0.37}
\definecolor{NavajoWhite}{rgb}{1.00,0.87,0.68}
\definecolor{NavyBlue}{rgb}{0.00,0.00,0.50}
\definecolor{OldLace}{rgb}{0.99,0.96,0.90}
\definecolor{OliveDrab1}{rgb}{0.75,1.00,0.24}
\definecolor{OliveDrab2}{rgb}{0.70,0.93,0.23}
\definecolor{OliveDrab3}{rgb}{0.60,0.80,0.20}
\definecolor{OliveDrab4}{rgb}{0.41,0.55,0.13}
\definecolor{OliveDrab}{rgb}{0.42,0.56,0.14}
\definecolor{OrangeRed1}{rgb}{1.00,0.27,0.00}
\definecolor{OrangeRed2}{rgb}{0.93,0.25,0.00}
\definecolor{OrangeRed3}{rgb}{0.80,0.22,0.00}
\definecolor{OrangeRed4}{rgb}{0.55,0.15,0.00}
\definecolor{OrangeRed}{rgb}{1.00,0.27,0.00}
\definecolor{PaleGoldenrod}{rgb}{0.93,0.91,0.67}
\definecolor{PaleGreen1}{rgb}{0.60,1.00,0.60}
\definecolor{PaleGreen2}{rgb}{0.56,0.93,0.56}
\definecolor{PaleGreen3}{rgb}{0.49,0.80,0.49}
\definecolor{PaleGreen4}{rgb}{0.33,0.55,0.33}
\definecolor{PaleGreen}{rgb}{0.60,0.98,0.60}
\definecolor{PaleTurquoise1}{rgb}{0.73,1.00,1.00}
\definecolor{PaleTurquoise2}{rgb}{0.68,0.93,0.93}
\definecolor{PaleTurquoise3}{rgb}{0.59,0.80,0.80}
\definecolor{PaleTurquoise4}{rgb}{0.40,0.55,0.55}
\definecolor{PaleTurquoise}{rgb}{0.69,0.93,0.93}
\definecolor{PaleVioletRed1}{rgb}{1.00,0.51,0.67}
\definecolor{PaleVioletRed2}{rgb}{0.93,0.47,0.62}
\definecolor{PaleVioletRed3}{rgb}{0.80,0.41,0.54}
\definecolor{PaleVioletRed4}{rgb}{0.55,0.28,0.36}
\definecolor{PaleVioletRed}{rgb}{0.86,0.44,0.58}
\definecolor{PapayaWhip}{rgb}{1.00,0.94,0.84}
\definecolor{PeachPuff1}{rgb}{1.00,0.85,0.73}
\definecolor{PeachPuff2}{rgb}{0.93,0.80,0.68}
\definecolor{PeachPuff3}{rgb}{0.80,0.69,0.58}
\definecolor{PeachPuff4}{rgb}{0.55,0.47,0.40}
\definecolor{PeachPuff}{rgb}{1.00,0.85,0.73}
\definecolor{PowderBlue}{rgb}{0.69,0.88,0.90}
\definecolor{RosyBrown1}{rgb}{1.00,0.76,0.76}
\definecolor{RosyBrown2}{rgb}{0.93,0.71,0.71}
\definecolor{RosyBrown3}{rgb}{0.80,0.61,0.61}
\definecolor{RosyBrown4}{rgb}{0.55,0.41,0.41}
\definecolor{RosyBrown}{rgb}{0.74,0.56,0.56}
\definecolor{RoyalBlue1}{rgb}{0.28,0.46,1.00}
\definecolor{RoyalBlue2}{rgb}{0.26,0.43,0.93}
\definecolor{RoyalBlue3}{rgb}{0.23,0.37,0.80}
\definecolor{RoyalBlue4}{rgb}{0.15,0.25,0.55}
\definecolor{RoyalBlue}{rgb}{0.25,0.41,0.88}
\definecolor{SaddleBrown}{rgb}{0.55,0.27,0.07}
\definecolor{SandyBrown}{rgb}{0.96,0.64,0.38}
\definecolor{SeaGreen1}{rgb}{0.33,1.00,0.62}
\definecolor{SeaGreen2}{rgb}{0.31,0.93,0.58}
\definecolor{SeaGreen3}{rgb}{0.26,0.80,0.50}
\definecolor{SeaGreen4}{rgb}{0.18,0.55,0.34}
\definecolor{SeaGreen}{rgb}{0.18,0.55,0.34}
\definecolor{SkyBlue1}{rgb}{0.53,0.81,1.00}
\definecolor{SkyBlue2}{rgb}{0.49,0.75,0.93}
\definecolor{SkyBlue3}{rgb}{0.42,0.65,0.80}
\definecolor{SkyBlue4}{rgb}{0.29,0.44,0.55}
\definecolor{SkyBlue}{rgb}{0.53,0.81,0.92}
\definecolor{SlateBlue1}{rgb}{0.51,0.44,1.00}
\definecolor{SlateBlue2}{rgb}{0.48,0.40,0.93}
\definecolor{SlateBlue3}{rgb}{0.41,0.35,0.80}
\definecolor{SlateBlue4}{rgb}{0.28,0.24,0.55}
\definecolor{SlateBlue}{rgb}{0.42,0.35,0.80}
\definecolor{SlateGray1}{rgb}{0.78,0.89,1.00}
\definecolor{SlateGray2}{rgb}{0.73,0.83,0.93}
\definecolor{SlateGray3}{rgb}{0.62,0.71,0.80}
\definecolor{SlateGray4}{rgb}{0.42,0.48,0.55}
\definecolor{SlateGray}{rgb}{0.44,0.50,0.56}
\definecolor{SlateGrey}{rgb}{0.44,0.50,0.56}
\definecolor{SpringGreen1}{rgb}{0.00,1.00,0.50}
\definecolor{SpringGreen2}{rgb}{0.00,0.93,0.46}
\definecolor{SpringGreen3}{rgb}{0.00,0.80,0.40}
\definecolor{SpringGreen4}{rgb}{0.00,0.55,0.27}
\definecolor{SpringGreen}{rgb}{0.00,1.00,0.50}
\definecolor{SteelBlue1}{rgb}{0.39,0.72,1.00}
\definecolor{SteelBlue2}{rgb}{0.36,0.67,0.93}
\definecolor{SteelBlue3}{rgb}{0.31,0.58,0.80}
\definecolor{SteelBlue4}{rgb}{0.21,0.39,0.55}
\definecolor{SteelBlue}{rgb}{0.27,0.51,0.71}
\definecolor{VioletRed1}{rgb}{1.00,0.24,0.59}
\definecolor{VioletRed2}{rgb}{0.93,0.23,0.55}
\definecolor{VioletRed3}{rgb}{0.80,0.20,0.47}
\definecolor{VioletRed4}{rgb}{0.55,0.13,0.32}
\definecolor{VioletRed}{rgb}{0.82,0.13,0.56}
\definecolor{WhiteSmoke}{rgb}{0.96,0.96,0.96}
\definecolor{YellowGreen}{rgb}{0.60,0.80,0.20}
\definecolor{aliceblue}{rgb}{0.94,0.97,1.00}
\definecolor{antiquewhite}{rgb}{0.98,0.92,0.84}
\definecolor{aquamarine1}{rgb}{0.50,1.00,0.83}
\definecolor{aquamarine2}{rgb}{0.46,0.93,0.78}
\definecolor{aquamarine3}{rgb}{0.40,0.80,0.67}
\definecolor{aquamarine4}{rgb}{0.27,0.55,0.45}
\definecolor{aquamarine}{rgb}{0.50,1.00,0.83}
\definecolor{azure1}{rgb}{0.94,1.00,1.00}
\definecolor{azure2}{rgb}{0.88,0.93,0.93}
\definecolor{azure3}{rgb}{0.76,0.80,0.80}
\definecolor{azure4}{rgb}{0.51,0.55,0.55}
\definecolor{azure}{rgb}{0.94,1.00,1.00}
\definecolor{beige}{rgb}{0.96,0.96,0.86}
\definecolor{bisque1}{rgb}{1.00,0.89,0.77}
\definecolor{bisque2}{rgb}{0.93,0.84,0.72}
\definecolor{bisque3}{rgb}{0.80,0.72,0.62}
\definecolor{bisque4}{rgb}{0.55,0.49,0.42}
\definecolor{bisque}{rgb}{1.00,0.89,0.77}
\definecolor{black}{rgb}{0.00,0.00,0.00}
\definecolor{blanchedalmond}{rgb}{1.00,0.92,0.80}
\definecolor{blue1}{rgb}{0.00,0.00,1.00}
\definecolor{blue2}{rgb}{0.00,0.00,0.93}
\definecolor{blue3}{rgb}{0.00,0.00,0.80}
\definecolor{blue4}{rgb}{0.00,0.00,0.55}
\definecolor{blueviolet}{rgb}{0.54,0.17,0.89}
\definecolor{blue}{rgb}{0.00,0.00,1.00}
\definecolor{brown1}{rgb}{1.00,0.25,0.25}
\definecolor{brown2}{rgb}{0.93,0.23,0.23}
\definecolor{brown3}{rgb}{0.80,0.20,0.20}
\definecolor{brown4}{rgb}{0.55,0.14,0.14}
\definecolor{brown}{rgb}{0.65,0.16,0.16}
\definecolor{burlywood1}{rgb}{1.00,0.83,0.61}
\definecolor{burlywood2}{rgb}{0.93,0.77,0.57}
\definecolor{burlywood3}{rgb}{0.80,0.67,0.49}
\definecolor{burlywood4}{rgb}{0.55,0.45,0.33}
\definecolor{burlywood}{rgb}{0.87,0.72,0.53}
\definecolor{cadetblue}{rgb}{0.37,0.62,0.63}
\definecolor{chartreuse1}{rgb}{0.50,1.00,0.00}
\definecolor{chartreuse2}{rgb}{0.46,0.93,0.00}
\definecolor{chartreuse3}{rgb}{0.40,0.80,0.00}
\definecolor{chartreuse4}{rgb}{0.27,0.55,0.00}
\definecolor{chartreuse}{rgb}{0.50,1.00,0.00}
\definecolor{chocolate1}{rgb}{1.00,0.50,0.14}
\definecolor{chocolate2}{rgb}{0.93,0.46,0.13}
\definecolor{chocolate3}{rgb}{0.80,0.40,0.11}
\definecolor{chocolate4}{rgb}{0.55,0.27,0.07}
\definecolor{chocolate}{rgb}{0.82,0.41,0.12}
\definecolor{coral1}{rgb}{1.00,0.45,0.34}
\definecolor{coral2}{rgb}{0.93,0.42,0.31}
\definecolor{coral3}{rgb}{0.80,0.36,0.27}
\definecolor{coral4}{rgb}{0.55,0.24,0.18}
\definecolor{coral}{rgb}{1.00,0.50,0.31}
\definecolor{cornflowerblue}{rgb}{0.39,0.58,0.93}
\definecolor{cornsilk1}{rgb}{1.00,0.97,0.86}
\definecolor{cornsilk2}{rgb}{0.93,0.91,0.80}
\definecolor{cornsilk3}{rgb}{0.80,0.78,0.69}
\definecolor{cornsilk4}{rgb}{0.55,0.53,0.47}
\definecolor{cornsilk}{rgb}{1.00,0.97,0.86}
\definecolor{cyan1}{rgb}{0.00,1.00,1.00}
\definecolor{cyan2}{rgb}{0.00,0.93,0.93}
\definecolor{cyan3}{rgb}{0.00,0.80,0.80}
\definecolor{cyan4}{rgb}{0.00,0.55,0.55}
\definecolor{cyan}{rgb}{0.00,1.00,1.00}
\definecolor{darkblue}{rgb}{0.00,0.00,0.55}
\definecolor{darkcyan}{rgb}{0.00,0.55,0.55}
\definecolor{darkgoldenrod}{rgb}{0.72,0.53,0.04}
\definecolor{darkgray}{rgb}{0.66,0.66,0.66}
\definecolor{darkgreen}{rgb}{0.00,0.39,0.00}
\definecolor{darkgrey}{rgb}{0.66,0.66,0.66}
\definecolor{darkkhaki}{rgb}{0.74,0.72,0.42}
\definecolor{darkmagenta}{rgb}{0.55,0.00,0.55}
\definecolor{darkolive}{rgb}{0.33,0.42,0.18}
\definecolor{darkorange}{rgb}{1.00,0.55,0.00}
\definecolor{darkorchid}{rgb}{0.60,0.20,0.80}
\definecolor{darkred}{rgb}{0.55,0.00,0.00}
\definecolor{darksalmon}{rgb}{0.91,0.59,0.48}
\definecolor{darksea}{rgb}{0.56,0.74,0.56}
\definecolor{darkslate}{rgb}{0.18,0.31,0.31}
\definecolor{darkslate}{rgb}{0.18,0.31,0.31}
\definecolor{darkslate}{rgb}{0.28,0.24,0.55}
\definecolor{darkturquoise}{rgb}{0.00,0.81,0.82}
\definecolor{darkviolet}{rgb}{0.58,0.00,0.83}
\definecolor{deeppink}{rgb}{1.00,0.08,0.58}
\definecolor{deepsky}{rgb}{0.00,0.75,1.00}
\definecolor{dimgray}{rgb}{0.41,0.41,0.41}
\definecolor{dimgrey}{rgb}{0.41,0.41,0.41}
\definecolor{dodgerblue}{rgb}{0.12,0.56,1.00}
\definecolor{firebrick1}{rgb}{1.00,0.19,0.19}
\definecolor{firebrick2}{rgb}{0.93,0.17,0.17}
\definecolor{firebrick3}{rgb}{0.80,0.15,0.15}
\definecolor{firebrick4}{rgb}{0.55,0.10,0.10}
\definecolor{firebrick}{rgb}{0.70,0.13,0.13}
\definecolor{floralwhite}{rgb}{1.00,0.98,0.94}
\definecolor{forestgreen}{rgb}{0.13,0.55,0.13}
\definecolor{gainsboro}{rgb}{0.86,0.86,0.86}
\definecolor{ghostwhite}{rgb}{0.97,0.97,1.00}
\definecolor{gold1}{rgb}{1.00,0.84,0.00}
\definecolor{gold2}{rgb}{0.93,0.79,0.00}
\definecolor{gold3}{rgb}{0.80,0.68,0.00}
\definecolor{gold4}{rgb}{0.55,0.46,0.00}
\definecolor{goldenrod1}{rgb}{1.00,0.76,0.15}
\definecolor{goldenrod2}{rgb}{0.93,0.71,0.13}
\definecolor{goldenrod3}{rgb}{0.80,0.61,0.11}
\definecolor{goldenrod4}{rgb}{0.55,0.41,0.08}
\definecolor{goldenrod}{rgb}{0.85,0.65,0.13}
\definecolor{gold}{rgb}{1.00,0.84,0.00}
\definecolor{gray0}{rgb}{0.00,0.00,0.00}
\definecolor{gray100}{rgb}{1.00,1.00,1.00}
\definecolor{gray10}{rgb}{0.10,0.10,0.10}
\definecolor{gray11}{rgb}{0.11,0.11,0.11}
\definecolor{gray12}{rgb}{0.12,0.12,0.12}
\definecolor{gray13}{rgb}{0.13,0.13,0.13}
\definecolor{gray14}{rgb}{0.14,0.14,0.14}
\definecolor{gray15}{rgb}{0.15,0.15,0.15}
\definecolor{gray16}{rgb}{0.16,0.16,0.16}
\definecolor{gray17}{rgb}{0.17,0.17,0.17}
\definecolor{gray18}{rgb}{0.18,0.18,0.18}
\definecolor{gray19}{rgb}{0.19,0.19,0.19}
\definecolor{gray1}{rgb}{0.01,0.01,0.01}
\definecolor{gray20}{rgb}{0.20,0.20,0.20}
\definecolor{gray21}{rgb}{0.21,0.21,0.21}
\definecolor{gray22}{rgb}{0.22,0.22,0.22}
\definecolor{gray23}{rgb}{0.23,0.23,0.23}
\definecolor{gray24}{rgb}{0.24,0.24,0.24}
\definecolor{gray25}{rgb}{0.25,0.25,0.25}
\definecolor{gray26}{rgb}{0.26,0.26,0.26}
\definecolor{gray27}{rgb}{0.27,0.27,0.27}
\definecolor{gray28}{rgb}{0.28,0.28,0.28}
\definecolor{gray29}{rgb}{0.29,0.29,0.29}
\definecolor{gray2}{rgb}{0.02,0.02,0.02}
\definecolor{gray30}{rgb}{0.30,0.30,0.30}
\definecolor{gray31}{rgb}{0.31,0.31,0.31}
\definecolor{gray32}{rgb}{0.32,0.32,0.32}
\definecolor{gray33}{rgb}{0.33,0.33,0.33}
\definecolor{gray34}{rgb}{0.34,0.34,0.34}
\definecolor{gray35}{rgb}{0.35,0.35,0.35}
\definecolor{gray36}{rgb}{0.36,0.36,0.36}
\definecolor{gray37}{rgb}{0.37,0.37,0.37}
\definecolor{gray38}{rgb}{0.38,0.38,0.38}
\definecolor{gray39}{rgb}{0.39,0.39,0.39}
\definecolor{gray3}{rgb}{0.03,0.03,0.03}
\definecolor{gray40}{rgb}{0.40,0.40,0.40}
\definecolor{gray41}{rgb}{0.41,0.41,0.41}
\definecolor{gray42}{rgb}{0.42,0.42,0.42}
\definecolor{gray43}{rgb}{0.43,0.43,0.43}
\definecolor{gray44}{rgb}{0.44,0.44,0.44}
\definecolor{gray45}{rgb}{0.45,0.45,0.45}
\definecolor{gray46}{rgb}{0.46,0.46,0.46}
\definecolor{gray47}{rgb}{0.47,0.47,0.47}
\definecolor{gray48}{rgb}{0.48,0.48,0.48}
\definecolor{gray49}{rgb}{0.49,0.49,0.49}
\definecolor{gray4}{rgb}{0.04,0.04,0.04}
\definecolor{gray50}{rgb}{0.50,0.50,0.50}
\definecolor{gray51}{rgb}{0.51,0.51,0.51}
\definecolor{gray52}{rgb}{0.52,0.52,0.52}
\definecolor{gray53}{rgb}{0.53,0.53,0.53}
\definecolor{gray54}{rgb}{0.54,0.54,0.54}
\definecolor{gray55}{rgb}{0.55,0.55,0.55}
\definecolor{gray56}{rgb}{0.56,0.56,0.56}
\definecolor{gray57}{rgb}{0.57,0.57,0.57}
\definecolor{gray58}{rgb}{0.58,0.58,0.58}
\definecolor{gray59}{rgb}{0.59,0.59,0.59}
\definecolor{gray5}{rgb}{0.05,0.05,0.05}
\definecolor{gray60}{rgb}{0.60,0.60,0.60}
\definecolor{gray61}{rgb}{0.61,0.61,0.61}
\definecolor{gray62}{rgb}{0.62,0.62,0.62}
\definecolor{gray63}{rgb}{0.63,0.63,0.63}
\definecolor{gray64}{rgb}{0.64,0.64,0.64}
\definecolor{gray65}{rgb}{0.65,0.65,0.65}
\definecolor{gray66}{rgb}{0.66,0.66,0.66}
\definecolor{gray67}{rgb}{0.67,0.67,0.67}
\definecolor{gray68}{rgb}{0.68,0.68,0.68}
\definecolor{gray69}{rgb}{0.69,0.69,0.69}
\definecolor{gray6}{rgb}{0.06,0.06,0.06}
\definecolor{gray70}{rgb}{0.70,0.70,0.70}
\definecolor{gray71}{rgb}{0.71,0.71,0.71}
\definecolor{gray72}{rgb}{0.72,0.72,0.72}
\definecolor{gray73}{rgb}{0.73,0.73,0.73}
\definecolor{gray74}{rgb}{0.74,0.74,0.74}
\definecolor{gray75}{rgb}{0.75,0.75,0.75}
\definecolor{gray76}{rgb}{0.76,0.76,0.76}
\definecolor{gray77}{rgb}{0.77,0.77,0.77}
\definecolor{gray78}{rgb}{0.78,0.78,0.78}
\definecolor{gray79}{rgb}{0.79,0.79,0.79}
\definecolor{gray7}{rgb}{0.07,0.07,0.07}
\definecolor{gray80}{rgb}{0.80,0.80,0.80}
\definecolor{gray81}{rgb}{0.81,0.81,0.81}
\definecolor{gray82}{rgb}{0.82,0.82,0.82}
\definecolor{gray83}{rgb}{0.83,0.83,0.83}
\definecolor{gray84}{rgb}{0.84,0.84,0.84}
\definecolor{gray85}{rgb}{0.85,0.85,0.85}
\definecolor{gray86}{rgb}{0.86,0.86,0.86}
\definecolor{gray87}{rgb}{0.87,0.87,0.87}
\definecolor{gray88}{rgb}{0.88,0.88,0.88}
\definecolor{gray89}{rgb}{0.89,0.89,0.89}
\definecolor{gray8}{rgb}{0.08,0.08,0.08}
\definecolor{gray90}{rgb}{0.90,0.90,0.90}
\definecolor{gray91}{rgb}{0.91,0.91,0.91}
\definecolor{gray92}{rgb}{0.92,0.92,0.92}
\definecolor{gray93}{rgb}{0.93,0.93,0.93}
\definecolor{gray94}{rgb}{0.94,0.94,0.94}
\definecolor{gray95}{rgb}{0.95,0.95,0.95}
\definecolor{gray96}{rgb}{0.96,0.96,0.96}
\definecolor{gray97}{rgb}{0.97,0.97,0.97}
\definecolor{gray98}{rgb}{0.98,0.98,0.98}
\definecolor{gray99}{rgb}{0.99,0.99,0.99}
\definecolor{gray9}{rgb}{0.09,0.09,0.09}
\definecolor{gray}{rgb}{0.75,0.75,0.75}
\definecolor{green1}{rgb}{0.00,1.00,0.00}
\definecolor{green2}{rgb}{0.00,0.93,0.00}
\definecolor{green3}{rgb}{0.00,0.80,0.00}
\definecolor{green4}{rgb}{0.00,0.55,0.00}
\definecolor{greenyellow}{rgb}{0.68,1.00,0.18}
\definecolor{green}{rgb}{0.00,1.00,0.00}
\definecolor{grey0}{rgb}{0.00,0.00,0.00}
\definecolor{grey100}{rgb}{1.00,1.00,1.00}
\definecolor{grey10}{rgb}{0.10,0.10,0.10}
\definecolor{grey11}{rgb}{0.11,0.11,0.11}
\definecolor{grey12}{rgb}{0.12,0.12,0.12}
\definecolor{grey13}{rgb}{0.13,0.13,0.13}
\definecolor{grey14}{rgb}{0.14,0.14,0.14}
\definecolor{grey15}{rgb}{0.15,0.15,0.15}
\definecolor{grey16}{rgb}{0.16,0.16,0.16}
\definecolor{grey17}{rgb}{0.17,0.17,0.17}
\definecolor{grey18}{rgb}{0.18,0.18,0.18}
\definecolor{grey19}{rgb}{0.19,0.19,0.19}
\definecolor{grey1}{rgb}{0.01,0.01,0.01}
\definecolor{grey20}{rgb}{0.20,0.20,0.20}
\definecolor{grey21}{rgb}{0.21,0.21,0.21}
\definecolor{grey22}{rgb}{0.22,0.22,0.22}
\definecolor{grey23}{rgb}{0.23,0.23,0.23}
\definecolor{grey24}{rgb}{0.24,0.24,0.24}
\definecolor{grey25}{rgb}{0.25,0.25,0.25}
\definecolor{grey26}{rgb}{0.26,0.26,0.26}
\definecolor{grey27}{rgb}{0.27,0.27,0.27}
\definecolor{grey28}{rgb}{0.28,0.28,0.28}
\definecolor{grey29}{rgb}{0.29,0.29,0.29}
\definecolor{grey2}{rgb}{0.02,0.02,0.02}
\definecolor{grey30}{rgb}{0.30,0.30,0.30}
\definecolor{grey31}{rgb}{0.31,0.31,0.31}
\definecolor{grey32}{rgb}{0.32,0.32,0.32}
\definecolor{grey33}{rgb}{0.33,0.33,0.33}
\definecolor{grey34}{rgb}{0.34,0.34,0.34}
\definecolor{grey35}{rgb}{0.35,0.35,0.35}
\definecolor{grey36}{rgb}{0.36,0.36,0.36}
\definecolor{grey37}{rgb}{0.37,0.37,0.37}
\definecolor{grey38}{rgb}{0.38,0.38,0.38}
\definecolor{grey39}{rgb}{0.39,0.39,0.39}
\definecolor{grey3}{rgb}{0.03,0.03,0.03}
\definecolor{grey40}{rgb}{0.40,0.40,0.40}
\definecolor{grey41}{rgb}{0.41,0.41,0.41}
\definecolor{grey42}{rgb}{0.42,0.42,0.42}
\definecolor{grey43}{rgb}{0.43,0.43,0.43}
\definecolor{grey44}{rgb}{0.44,0.44,0.44}
\definecolor{grey45}{rgb}{0.45,0.45,0.45}
\definecolor{grey46}{rgb}{0.46,0.46,0.46}
\definecolor{grey47}{rgb}{0.47,0.47,0.47}
\definecolor{grey48}{rgb}{0.48,0.48,0.48}
\definecolor{grey49}{rgb}{0.49,0.49,0.49}
\definecolor{grey4}{rgb}{0.04,0.04,0.04}
\definecolor{grey50}{rgb}{0.50,0.50,0.50}
\definecolor{grey51}{rgb}{0.51,0.51,0.51}
\definecolor{grey52}{rgb}{0.52,0.52,0.52}
\definecolor{grey53}{rgb}{0.53,0.53,0.53}
\definecolor{grey54}{rgb}{0.54,0.54,0.54}
\definecolor{grey55}{rgb}{0.55,0.55,0.55}
\definecolor{grey56}{rgb}{0.56,0.56,0.56}
\definecolor{grey57}{rgb}{0.57,0.57,0.57}
\definecolor{grey58}{rgb}{0.58,0.58,0.58}
\definecolor{grey59}{rgb}{0.59,0.59,0.59}
\definecolor{grey5}{rgb}{0.05,0.05,0.05}
\definecolor{grey60}{rgb}{0.60,0.60,0.60}
\definecolor{grey61}{rgb}{0.61,0.61,0.61}
\definecolor{grey62}{rgb}{0.62,0.62,0.62}
\definecolor{grey63}{rgb}{0.63,0.63,0.63}
\definecolor{grey64}{rgb}{0.64,0.64,0.64}
\definecolor{grey65}{rgb}{0.65,0.65,0.65}
\definecolor{grey66}{rgb}{0.66,0.66,0.66}
\definecolor{grey67}{rgb}{0.67,0.67,0.67}
\definecolor{grey68}{rgb}{0.68,0.68,0.68}
\definecolor{grey69}{rgb}{0.69,0.69,0.69}
\definecolor{grey6}{rgb}{0.06,0.06,0.06}
\definecolor{grey70}{rgb}{0.70,0.70,0.70}
\definecolor{grey71}{rgb}{0.71,0.71,0.71}
\definecolor{grey72}{rgb}{0.72,0.72,0.72}
\definecolor{grey73}{rgb}{0.73,0.73,0.73}
\definecolor{grey74}{rgb}{0.74,0.74,0.74}
\definecolor{grey75}{rgb}{0.75,0.75,0.75}
\definecolor{grey76}{rgb}{0.76,0.76,0.76}
\definecolor{grey77}{rgb}{0.77,0.77,0.77}
\definecolor{grey78}{rgb}{0.78,0.78,0.78}
\definecolor{grey79}{rgb}{0.79,0.79,0.79}
\definecolor{grey7}{rgb}{0.07,0.07,0.07}
\definecolor{grey80}{rgb}{0.80,0.80,0.80}
\definecolor{grey81}{rgb}{0.81,0.81,0.81}
\definecolor{grey82}{rgb}{0.82,0.82,0.82}
\definecolor{grey83}{rgb}{0.83,0.83,0.83}
\definecolor{grey84}{rgb}{0.84,0.84,0.84}
\definecolor{grey85}{rgb}{0.85,0.85,0.85}
\definecolor{grey86}{rgb}{0.86,0.86,0.86}
\definecolor{grey87}{rgb}{0.87,0.87,0.87}
\definecolor{grey88}{rgb}{0.88,0.88,0.88}
\definecolor{grey89}{rgb}{0.89,0.89,0.89}
\definecolor{grey8}{rgb}{0.08,0.08,0.08}
\definecolor{grey90}{rgb}{0.90,0.90,0.90}
\definecolor{grey91}{rgb}{0.91,0.91,0.91}
\definecolor{grey92}{rgb}{0.92,0.92,0.92}
\definecolor{grey93}{rgb}{0.93,0.93,0.93}
\definecolor{grey94}{rgb}{0.94,0.94,0.94}
\definecolor{grey95}{rgb}{0.95,0.95,0.95}
\definecolor{grey96}{rgb}{0.96,0.96,0.96}
\definecolor{grey97}{rgb}{0.97,0.97,0.97}
\definecolor{grey98}{rgb}{0.98,0.98,0.98}
\definecolor{grey99}{rgb}{0.99,0.99,0.99}
\definecolor{grey9}{rgb}{0.09,0.09,0.09}
\definecolor{grey}{rgb}{0.75,0.75,0.75}
\definecolor{honeydew1}{rgb}{0.94,1.00,0.94}
\definecolor{honeydew2}{rgb}{0.88,0.93,0.88}
\definecolor{honeydew3}{rgb}{0.76,0.80,0.76}
\definecolor{honeydew4}{rgb}{0.51,0.55,0.51}
\definecolor{honeydew}{rgb}{0.94,1.00,0.94}
\definecolor{hotpink}{rgb}{1.00,0.41,0.71}
\definecolor{indianred}{rgb}{0.80,0.36,0.36}
\definecolor{ivory1}{rgb}{1.00,1.00,0.94}
\definecolor{ivory2}{rgb}{0.93,0.93,0.88}
\definecolor{ivory3}{rgb}{0.80,0.80,0.76}
\definecolor{ivory4}{rgb}{0.55,0.55,0.51}
\definecolor{ivory}{rgb}{1.00,1.00,0.94}
\definecolor{khaki1}{rgb}{1.00,0.96,0.56}
\definecolor{khaki2}{rgb}{0.93,0.90,0.52}
\definecolor{khaki3}{rgb}{0.80,0.78,0.45}
\definecolor{khaki4}{rgb}{0.55,0.53,0.31}
\definecolor{khaki}{rgb}{0.94,0.90,0.55}
\definecolor{lavenderblush}{rgb}{1.00,0.94,0.96}
\definecolor{lavender}{rgb}{0.90,0.90,0.98}
\definecolor{lawngreen}{rgb}{0.49,0.99,0.00}
\definecolor{lemonchiffon}{rgb}{1.00,0.98,0.80}
\definecolor{lightblue}{rgb}{0.68,0.85,0.90}
\definecolor{lightcoral}{rgb}{0.94,0.50,0.50}
\definecolor{lightcyan}{rgb}{0.88,1.00,1.00}
\definecolor{lightgoldenrod}{rgb}{0.93,0.87,0.51}
\definecolor{lightgoldenrod}{rgb}{0.98,0.98,0.82}
\definecolor{lightgray}{rgb}{0.83,0.83,0.83}
\definecolor{lightgreen}{rgb}{0.56,0.93,0.56}
\definecolor{lightgrey}{rgb}{0.83,0.83,0.83}
\definecolor{lightpink}{rgb}{1.00,0.71,0.76}
\definecolor{lightsalmon}{rgb}{1.00,0.63,0.48}
\definecolor{lightsea}{rgb}{0.13,0.70,0.67}
\definecolor{lightsky}{rgb}{0.53,0.81,0.98}
\definecolor{lightslate}{rgb}{0.47,0.53,0.60}
\definecolor{lightslate}{rgb}{0.47,0.53,0.60}
\definecolor{lightslate}{rgb}{0.52,0.44,1.00}
\definecolor{lightsteel}{rgb}{0.69,0.77,0.87}
\definecolor{lightyellow}{rgb}{1.00,1.00,0.88}
\definecolor{limegreen}{rgb}{0.20,0.80,0.20}
\definecolor{linen}{rgb}{0.98,0.94,0.90}
\definecolor{magenta1}{rgb}{1.00,0.00,1.00}
\definecolor{magenta2}{rgb}{0.93,0.00,0.93}
\definecolor{magenta3}{rgb}{0.80,0.00,0.80}
\definecolor{magenta4}{rgb}{0.55,0.00,0.55}
\definecolor{magenta}{rgb}{1.00,0.00,1.00}
\definecolor{maroon1}{rgb}{1.00,0.20,0.70}
\definecolor{maroon2}{rgb}{0.93,0.19,0.65}
\definecolor{maroon3}{rgb}{0.80,0.16,0.56}
\definecolor{maroon4}{rgb}{0.55,0.11,0.38}
\definecolor{maroon}{rgb}{0.69,0.19,0.38}
\definecolor{mediumaquamarine}{rgb}{0.40,0.80,0.67}
\definecolor{mediumblue}{rgb}{0.00,0.00,0.80}
\definecolor{mediumorchid}{rgb}{0.73,0.33,0.83}
\definecolor{mediumpurple}{rgb}{0.58,0.44,0.86}
\definecolor{mediumsea}{rgb}{0.24,0.70,0.44}
\definecolor{mediumslate}{rgb}{0.48,0.41,0.93}
\definecolor{mediumspring}{rgb}{0.00,0.98,0.60}
\definecolor{mediumturquoise}{rgb}{0.28,0.82,0.80}
\definecolor{mediumviolet}{rgb}{0.78,0.08,0.52}
\definecolor{midnightblue}{rgb}{0.10,0.10,0.44}
\definecolor{mintcream}{rgb}{0.96,1.00,0.98}
\definecolor{mistyrose}{rgb}{1.00,0.89,0.88}
\definecolor{moccasin}{rgb}{1.00,0.89,0.71}
\definecolor{navajowhite}{rgb}{1.00,0.87,0.68}
\definecolor{navyblue}{rgb}{0.00,0.00,0.50}
\definecolor{navy}{rgb}{0.00,0.00,0.50}
\definecolor{oldlace}{rgb}{0.99,0.96,0.90}
\definecolor{olivedrab}{rgb}{0.42,0.56,0.14}
\definecolor{orange1}{rgb}{1.00,0.65,0.00}
\definecolor{orange2}{rgb}{0.93,0.60,0.00}
\definecolor{orange3}{rgb}{0.80,0.52,0.00}
\definecolor{orange4}{rgb}{0.55,0.35,0.00}
\definecolor{orangered}{rgb}{1.00,0.27,0.00}
\definecolor{orange}{rgb}{1.00,0.65,0.00}
\definecolor{orchid1}{rgb}{1.00,0.51,0.98}
\definecolor{orchid2}{rgb}{0.93,0.48,0.91}
\definecolor{orchid3}{rgb}{0.80,0.41,0.79}
\definecolor{orchid4}{rgb}{0.55,0.28,0.54}
\definecolor{orchid}{rgb}{0.85,0.44,0.84}
\definecolor{palegoldenrod}{rgb}{0.93,0.91,0.67}
\definecolor{palegreen}{rgb}{0.60,0.98,0.60}
\definecolor{paleturquoise}{rgb}{0.69,0.93,0.93}
\definecolor{paleviolet}{rgb}{0.86,0.44,0.58}
\definecolor{papayawhip}{rgb}{1.00,0.94,0.84}
\definecolor{peachpuff}{rgb}{1.00,0.85,0.73}
\definecolor{peru}{rgb}{0.80,0.52,0.25}
\definecolor{pink1}{rgb}{1.00,0.71,0.77}
\definecolor{pink2}{rgb}{0.93,0.66,0.72}
\definecolor{pink3}{rgb}{0.80,0.57,0.62}
\definecolor{pink4}{rgb}{0.55,0.39,0.42}
\definecolor{pink}{rgb}{1.00,0.75,0.80}
\definecolor{plum1}{rgb}{1.00,0.73,1.00}
\definecolor{plum2}{rgb}{0.93,0.68,0.93}
\definecolor{plum3}{rgb}{0.80,0.59,0.80}
\definecolor{plum4}{rgb}{0.55,0.40,0.55}
\definecolor{plum}{rgb}{0.87,0.63,0.87}
\definecolor{powderblue}{rgb}{0.69,0.88,0.90}
\definecolor{purple1}{rgb}{0.61,0.19,1.00}
\definecolor{purple2}{rgb}{0.57,0.17,0.93}
\definecolor{purple3}{rgb}{0.49,0.15,0.80}
\definecolor{purple4}{rgb}{0.33,0.10,0.55}
\definecolor{purple}{rgb}{0.63,0.13,0.94}
\definecolor{red1}{rgb}{1.00,0.00,0.00}
\definecolor{red2}{rgb}{0.93,0.00,0.00}
\definecolor{red3}{rgb}{0.80,0.00,0.00}
\definecolor{red4}{rgb}{0.55,0.00,0.00}
\definecolor{red}{rgb}{1.00,0.00,0.00}
\definecolor{rosybrown}{rgb}{0.74,0.56,0.56}
\definecolor{royalblue}{rgb}{0.25,0.41,0.88}
\definecolor{saddlebrown}{rgb}{0.55,0.27,0.07}
\definecolor{salmon1}{rgb}{1.00,0.55,0.41}
\definecolor{salmon2}{rgb}{0.93,0.51,0.38}
\definecolor{salmon3}{rgb}{0.80,0.44,0.33}
\definecolor{salmon4}{rgb}{0.55,0.30,0.22}
\definecolor{salmon}{rgb}{0.98,0.50,0.45}
\definecolor{sandybrown}{rgb}{0.96,0.64,0.38}
\definecolor{seagreen}{rgb}{0.18,0.55,0.34}
\definecolor{seashell1}{rgb}{1.00,0.96,0.93}
\definecolor{seashell2}{rgb}{0.93,0.90,0.87}
\definecolor{seashell3}{rgb}{0.80,0.77,0.75}
\definecolor{seashell4}{rgb}{0.55,0.53,0.51}
\definecolor{seashell}{rgb}{1.00,0.96,0.93}
\definecolor{sienna1}{rgb}{1.00,0.51,0.28}
\definecolor{sienna2}{rgb}{0.93,0.47,0.26}
\definecolor{sienna3}{rgb}{0.80,0.41,0.22}
\definecolor{sienna4}{rgb}{0.55,0.28,0.15}
\definecolor{sienna}{rgb}{0.63,0.32,0.18}
\definecolor{skyblue}{rgb}{0.53,0.81,0.92}
\definecolor{slateblue}{rgb}{0.42,0.35,0.80}
\definecolor{slategray}{rgb}{0.44,0.50,0.56}
\definecolor{slategrey}{rgb}{0.44,0.50,0.56}
\definecolor{snow1}{rgb}{1.00,0.98,0.98}
\definecolor{snow2}{rgb}{0.93,0.91,0.91}
\definecolor{snow3}{rgb}{0.80,0.79,0.79}
\definecolor{snow4}{rgb}{0.55,0.54,0.54}
\definecolor{snow}{rgb}{1.00,0.98,0.98}
\definecolor{springgreen}{rgb}{0.00,1.00,0.50}
\definecolor{steelblue}{rgb}{0.27,0.51,0.71}
\definecolor{tan1}{rgb}{1.00,0.65,0.31}
\definecolor{tan2}{rgb}{0.93,0.60,0.29}
\definecolor{tan3}{rgb}{0.80,0.52,0.25}
\definecolor{tan4}{rgb}{0.55,0.35,0.17}
\definecolor{tan}{rgb}{0.82,0.71,0.55}
\definecolor{thistle1}{rgb}{1.00,0.88,1.00}
\definecolor{thistle2}{rgb}{0.93,0.82,0.93}
\definecolor{thistle3}{rgb}{0.80,0.71,0.80}
\definecolor{thistle4}{rgb}{0.55,0.48,0.55}
\definecolor{thistle}{rgb}{0.85,0.75,0.85}
\definecolor{tomato1}{rgb}{1.00,0.39,0.28}
\definecolor{tomato2}{rgb}{0.93,0.36,0.26}
\definecolor{tomato3}{rgb}{0.80,0.31,0.22}
\definecolor{tomato4}{rgb}{0.55,0.21,0.15}
\definecolor{tomato}{rgb}{1.00,0.39,0.28}
\definecolor{turquoise1}{rgb}{0.00,0.96,1.00}
\definecolor{turquoise2}{rgb}{0.00,0.90,0.93}
\definecolor{turquoise3}{rgb}{0.00,0.77,0.80}
\definecolor{turquoise4}{rgb}{0.00,0.53,0.55}
\definecolor{turquoise}{rgb}{0.25,0.88,0.82}
\definecolor{violetred}{rgb}{0.82,0.13,0.56}
\definecolor{violet}{rgb}{0.93,0.51,0.93}
\definecolor{wheat1}{rgb}{1.00,0.91,0.73}
\definecolor{wheat2}{rgb}{0.93,0.85,0.68}
\definecolor{wheat3}{rgb}{0.80,0.73,0.59}
\definecolor{wheat4}{rgb}{0.55,0.49,0.40}
\definecolor{wheat}{rgb}{0.96,0.87,0.70}
\definecolor{whitesmoke}{rgb}{0.96,0.96,0.96}
\definecolor{white}{rgb}{1.00,1.00,1.00}
\definecolor{yellow1}{rgb}{1.00,1.00,0.00}
\definecolor{yellow2}{rgb}{0.93,0.93,0.00}
\definecolor{yellow3}{rgb}{0.80,0.80,0.00}
\definecolor{yellow4}{rgb}{0.55,0.55,0.00}
\definecolor{yellowgreen}{rgb}{0.60,0.80,0.20}
\definecolor{yellow}{rgb}{1.00,1.00,0.00}
\numberwithin{equation}{section}
\newtheorem{thm}{Theorem}[section]
\newtheorem{cor}[thm]{Corollary}
\newtheorem{lem}[thm]{Lemma}
\newtheorem{prop}[thm]{Proposition}
\theoremstyle{definition}
\newtheorem{defn}[thm]{Definition}
\newtheorem{exam}[thm]{Example}
\newcommand{\lxr}{\longrightarrow}
\newcommand{\iso}{\cong}
\newcommand{\mr}{\mathsf{r}}
\newcommand{\mq}{\mathsf{q}}
\newcommand{\mi}{\mathsf{i}}
\newcommand{\ml}{\mathsf{l}}
\newcommand{\me}{\mathsf{e}}
\newcommand{\map}{\mathsf{p}}
\newcommand{\X}{\mathcal X}
\newcommand{\Y}{\mathcal Y}
\newcommand{\Z}{\mathcal Z}
\newcommand{\D}{\mathcal D}
\newcommand{\cS}{\mathcal S}
\newcommand{\U}{\mathcal U}
\newcommand{\V}{\mathcal V}
\newcommand{\W}{\mathcal W}
\newcommand{\F}{\mathscr F}
\DeclareMathOperator*{\im}{\mathsf{Im}}
\DeclareMathOperator*{\Ker}{\mathsf{Ker}}
\DeclareMathOperator*{\Gproj}{\mathsf{Gproj}}
\DeclareMathOperator*{\Loc}{\mathsf{Loc}}
\DeclareMathOperator*{\Ab}{\mathsf{Ab}}
\DeclareMathOperator*{\cht}{\mathsf{ht}}
\DeclareMathOperator*{\pGp}{\mathsf{pGp}}
\DeclareMathOperator*{\pd}{\mathsf{pd}}
\DeclareMathOperator*{\Mor}{\mathsf{Mor}}
\DeclareMathOperator*{\CS}{\mathsf{S}}
\newcommand{\La}{\Lambda}
\newsavebox{\proofbox}
\savebox{\proofbox}{\begin{picture}(7,7)%
  \put(0,0){\framebox(7,7){}}\end{picture}}
\begin{document}

\title[]{Lifting of recollements and Gorenstein projective modules}
\author[]{Nan Gao$^{*}$, Jing Ma
}
\address{Department of Mathematics, Shanghai University, Shanghai 200444, PR China}
\thanks{* is the corresponding author.}
\email{nangao@shu.edu.cn, majingmmmm@shu.edu.cn  }
\thanks{Supported by the National Natural Science Foundation of China (Grant No. 11771272 and 11871326).}

\date{\today}

\keywords{Recollements; Compactly generated triangulated categories; Ladders; Gorenstein-projective modules; Gorenstein algebras; Finite CM-type algebras}

\subjclass[2020]{18G20, 16G10.}

\begin{abstract}\ In the paper, we investigate the lifting of recollements with respect to Gorenstein-projective modules. Specifically,
a homological ring epimorphism can induce a lifting of the recollement of the stable category of finitely generated Gorenstein-projective modules; the recollement of the bounded Gorenstein derived categories of some upper triangular matrix algebras can be lifted to the homotopy category of Gorenstein-projective modules. As a byproduct, we give a sufficient and necessary condition on the upper triangular matrix algebra $T_{n}(A)$ to be of finite CM-type for an algebra $A$ of finite CM-type.
\end{abstract}

\maketitle


\vskip 20pt

\section{\bf Introduction}

\vskip 5pt

Derived categories, introduced by A.Grothendieck and J.L.Verdier \cite{V}, play an increasingly important role in various areas of mathematics, including representation theory, algebraic geometry, microlocal analysis and mathematical physics. Major topics of current interest include substructures of derived categories, such as bounded  $t$-structures, which form the ``skeleton" of Bridgeland's stability manifold, as well
as comparisons of derived categories.

\vskip 10pt

Recollements of triangulated categories were introduced by Beilinson, Bernstein and Deligne \cite{BBD} as a tool to get
information about the derived category of sheaves over a topological space $X$ from the corresponding derived categories
for an open subset $U\subseteq X$ and its complement $F=X\backslash U$. To any recollement one canonically associates the
TTF triples. Moreover, it is natural to look for conditions under which recollements of triangulated
categories at the ``bounded" level lift to recollements at the ``unbounded" level. Recently, M.Saor\'{\i}n and A.Zvonareva
\cite{SZ} gave a criterion for a recollement of triangulated subcategories to lift to a torsion torsion-free triple (TTF triple) of
ambient triangulated categories with coproducts.

\vskip 10pt

The study of Gorenstein homological algebra is due to Enochs and Jenda \cite{EJ1}. They introduced the concept of Gorenstein-projective modules, which are as a generalization of finitely generated modules of G-dimension zero over a two-sided Noetherian ring, in the sense of Auslander and Bridger \cite{AB}. The main idea of Gorenstein homological algebra is to replace projective modules by Gorenstein-projective modules, which is useful to study some Gorenstein properties. To intend to close a gap of the corresponding version of derived categories in
Gorenstein homological algebra, Gao and Zhang \cite{GZ} introduce Gorenstein derived category. Recently, Zhang \cite{Zh13} characterized the recollement of the stable categories of Gorenstein-projective modules over the triangular matrix algebra. Gao-Xu \cite{GX} showed that the recollement of derived categories of algebras induced by some homological ring epimorphism produces a ladder of the stable categories of Gorenstein-projective modules over corresponding algebras.

\vskip 10pt

In the paper, we investigate the lifting of recollements of bounded Gorenstein derived categories and the stable categories of Gorenstein-projective modules, respectively, where ladder is a crucial tool. We show that a homological ring epimorphism can induce a lifting of the recollement of the stable category of finitely generated Gorenstein-projective modules to non-finitely generated case (see Theorem~\ref{fg}). We prove that the recollement of the bounded Gorenstein derived categories of some upper triangular matrix algebra can be lifted to the homotopy category of Gorenstein-projective modules (see Theorem~\ref{Go} and Example~\ref{upper}). As a byproduct, we give a sufficient and necessary condition on the upper triangular matrix algebra $T_{n}(A)$ to be of finite CM-type for an algebra $A$ of finite CM-type (see Proposition~\ref{finiteCM}).

\vskip 10pt

\section{\bf Preliminaries}

\vskip 5pt

In this section, we fix notations and recall some basic concepts.

\vskip 10pt

Let $A$ be an Artin algebra. Denote by $A\mbox{-}{\rm Mod}$ (resp. $A\mbox{-}{\rm mod}$) the category of (resp. finitely generated) left $A\mbox{-}$modules, and $A\mbox{-}{\rm P}$ (resp. $A\mbox{-}{\rm proj}$) the full category of (resp. finitely generated) projective $A\mbox{-}$modules. A module $G$ of $A\mbox{-}{\rm Mod}$ (resp. $A\mbox{-}{\rm mod}$) is Gorenstein-projective if there is an exact sequence
$$\cdots \lxr P^{-1}\lxr P^{0}\stackrel{d^{0}}{\lxr} P^{1}\lxr \cdots$$
in $A\mbox{-}{\rm P}$ (resp. $A\mbox{-}{\rm proj}$), which stays exact after applying ${\rm Hom}_{A}(-, P)$ for each module $P$ in $A\mbox{-}{\rm P}$ (resp. $A\mbox{-}{\rm proj}$), such that $G\iso \Ker d^{0}$ (see \cite{EJ1, EJ2}). Denote by $A\mbox{-}{\rm GP}$ (resp. $A\mbox{-}{\rm Gproj}$) the full subcategories of Gorenstein-projective modules in $A\mbox{-}{\rm Mod}$ (resp. $A\mbox{-}{\rm mod}$), and $A\mbox{-}\underline{{\rm GP}}$ (resp. $A\mbox{-}\underline{{\rm Gproj}}$) the stable category of $A\mbox{-}{\rm GP}$ (resp. $A\mbox{-}{\rm Gproj}$) that modulo $A\mbox{-}{\rm P}$ (resp. $A\mbox{-}{\rm proj}$). Similarly, denote by $A\mbox{-}{\rm GI}$ the full subcategory of Gorenstein-injective modules in $A\mbox{-}{\rm Mod}$.

\vskip 10pt

Recall from \cite{B11} that an Artin algebra $A$ is of finite Cohen-Macaulay type (resp. finite CM-type for simply), if there are only finitely many isomorphism classes of indecomposable finitely generated Gorenstein-projective modules. Recall from \cite{H} that an Artin algebra $A$ is Gorenstein if ${\rm inj.dim} {_{A}A}<\infty$ and ${\rm inj.dim} A_{A}<\infty$. Recall from \cite{B05,BR} that an Artin algebra is called virtually Gorenstein if $(A\mbox{-}{\rm GP})^{\perp}= {^{\perp}(A\mbox{-}{\rm GI})}$, where
$$(A\mbox{-}{\rm GP})^{\perp}:=\{X\in A\mbox{-}{\rm Mod} \mid {\rm Ext}_{A}^{i}(G, X)=0, \ \forall i>0 \ and \ \forall G\in A\mbox{-}{\rm GP}\}$$
and
$$^{\perp}(A\mbox{-}{\rm GI}):=\{Y\in A\mbox{-}{\rm Mod}\mid {\rm Ext}_{A}^{i}(Y, I)=0, \ \forall i>0 \ and \ \forall I\in A\mbox{-}{\rm GI}\}.$$
Note that a Gorenstein algebra is virtually Gorenstein, but in general, the converse is not true.

\vskip 10pt

Now we write $C^{b}(A\mbox{-}{\rm Mod}),\ K^{b}(A\mbox{-}{\rm Mod})$ and $D^{b}(A\mbox{-}{\rm Mod})$ (resp. $C^{b}(A),\ K^{b}(A)$ and $D^{b}(A)$) for the bounded complex category, bounded homotopy category and bounded derived category of $A\mbox{-}{\rm Mod}$ (resp. $A\mbox{-}{\rm mod}$), respectively. Denote by $K(A\mbox{-}{\rm GP})$ (resp. $K(A\mbox{-}{\rm Gproj})$) the corresponding homotopy category of (finitely generated) Gorenstein-projective modules.

\vskip 10pt

A complex $C^{\bullet}$ of (finitely generated) $A\mbox{-}$modules is ${\rm GP}\mbox{-}$acyclic (resp. ${\rm Gproj}\mbox{-}$ acyclic), if ${\rm Hom}_{A}(G, C^{\bullet})$ is acyclic for each $G\in A\mbox{-}{\rm GP}$ (resp. $A\mbox{-}{\rm Gproj}$). A chain map $f^{\bullet}: X^{\bullet}\lxr Y^{\bullet}$ is a ${\rm GP}\mbox{-}$quasi-isomorphism (resp. ${\rm Gproj}\mbox{-}$quasi-isomorphism), if ${\rm Hom}_{A}(G, f^{\bullet})$ is a quasi-isomorphism for each $G\in A\mbox{-}{\rm GP}$ (resp. $A\mbox{-}{\rm Gproj}$), i.e., there are isomorphisms of abelian groups for any $n\in \mathbb{Z}$,
$$H^{n}{\rm Hom}_{A}(G, f^{\bullet}): H^{n}{\rm Hom}_{A}(G, X^{\bullet})\cong H^{n}{\rm Hom}_{A}(G, Y^{\bullet}).$$
Put
$$K^{b}_{gpac}(A\mbox{-}{\rm Mod}):=\{ X^{\bullet}\in K^{b}(A\mbox{-}{\rm Mod})\mid X^{\bullet} \ is \ {\rm GP}\mbox{-}acyclic\}$$
and
$$K^{b}_{gpac}(A):=\{ X^{\bullet}\in K^{b}(A)\mid X^{\bullet} \ is \ \Gproj\mbox{-}acyclic\}.$$
Then $K^{b}_{gpac}(A\mbox{-}{\rm Mod})$ (resp. $K^{b}_{gpac}(A)$) is a thick triangulated subcategory of $K^{b}(A\mbox{-}$ ${\rm Mod})$ (resp. $K^{b}(A)$). Following~\cite{GZ}, the Verdier quotient
$$D_{gp}^{b}(A\mbox{-}{\rm Mod}):=K^{b}(A\mbox{-}{\rm Mod})/ K^{b}_{gpac}(A\mbox{-}{\rm Mod})\ \ (resp.\ D_{gp}^{b}(A):=K^{b}(A)/ K^{b}_{gpac}(A)),$$
which is called the bounded Gorenstein derived category.

\vskip 15pt

Let $\D$ be a triangulated category. From \cite{IY}, a torsion pair in $\D$ is a pair of full subcategories $(\X, \Y)$, which are closed under direct summands, and satisfy ${\rm Hom}_{\D}(\X, \Y)=0$ and $\D=\X \ast \Y$, where
$$\X \ast \Y:= \{M\in \D \mid \exists\ a \ triangle\  X\lxr M\lxr Y\lxr X[1] \ with\ X\in \X,\ Y\in \Y \}.$$
A torsion pair is called a t-structure when $\X[1]\subseteq \X\ (\Leftrightarrow \Y[-1]\subseteq \Y)$ (see \cite{BBD}).
A TTF triple on $\D$ is a triple $(\U, \V, \W)$ of full subcategories of $\D$ such that $(\U, \V)$ and $(\V, \W)$ are t-structures on $\D$.

\vskip 5pt

Let $\D, \X$ and $\Y$ be triangulated categories. Recall from \cite{BBD} that $\D$ is said to be a recollement of $\X$ and $\Y$ if there are six triangle functors as in the following diagram
\[
\xymatrix@C=0.5cm{
\Y \ar[rrr]^{i_{\ast}} &&&
\D \ar[rrr]^{j^{\ast}}  \ar @/_1.5pc/[lll]_{i^{\ast}}  \ar @/^1.5pc/[lll]^{i^{!}} &&&
\X \ar @/_1.5pc/[lll]_{j_{!}}  \ar @/^1.5pc/[lll]^{j_{\ast}}
 }
\]
such that
\begin{enumerate}

\item \ $(i^{\ast}, i_{\ast}),\ (i_{\ast}, i^{!}),\ (j_{!}, j^{\ast})$ and $(j^{\ast}, j_{\ast})$ are adjoint pairs;

\vskip 5pt

\item \ $i_{\ast},\ j_{!}$ and $j_{\ast}$ are fully faithful;

\vskip 5pt

\item \ $j^{\ast}i_{\ast}=0$;

\vskip 5pt

\item \ For each $Z\in \D$, the counits and units give rise to distinguished triangles:
$$j_{!}j^{\ast}Z\lxr Z\lxr i_{\ast}i^{\ast}Z\lxr \ \ \ and\ \ \ i_{\ast}i^{!}Z\lxr Z\lxr j_{\ast}j^{\ast}Z\lxr.$$
\end{enumerate}
We simply denote the recollement by $(\Y\equiv \D\equiv \X)$. It is well known that TTF triples are in bijection with (equivalence classes of) recollement data \cite{BBD,Ne,Ni}. If there is a recollement as above, then $(\im j_{!}, \im i_{\ast}, \im j_{\ast})$ is a TTF triple in $\D$. Conversely, if $(\X, \Y, \Z)$ is a TTF triple in $\D$, then one obtains a recollement as above, where $j_{!}: \X\hookrightarrow \D$ and $i_{\ast}: \Y\hookrightarrow \D$ are the inclusion functors.

\vskip 10pt

Two recollements
\[
\xymatrix@C=0.4cm{
\Y \ar[rrr]^{i_{\ast}} &&&
\D \ar[rrr]^{j^{\ast}}  \ar @/_1.5pc/[lll]_{i^{\ast}}  \ar @/^1.5pc/[lll]^{i^{!}} &&&
\X \ar @/_1.5pc/[lll]_{j_{!}}  \ar @/^1.5pc/[lll]^{j_{\ast}}
}\ \ \ \ \ \
\xymatrix@C=0.4cm{
\Y' \ar[rrr]^{i'_{\ast}} &&&
\D' \ar[rrr]^{j'^{\ast}}  \ar @/_1.5pc/[lll]_{i'^{\ast}}  \ar @/^1.5pc/[lll]^{i'^{!}} &&&
\X' \ar @/_1.5pc/[lll]_{j'_{!}}  \ar @/^1.5pc/[lll]^{j'_{\ast}}
}
\]
are said to be equivalent, if $(\im j_{!}, \im i_{\ast}, \im j_{\ast})=(\im j'_{!}, \im i'_{\ast}, \im j'_{\ast})$.

\vskip 10pt

A ladder of recollements $\mathcal{L}$ is a finite or infinite diagram of triangulated categories and triangle functors
\[
\xymatrix@C=3em@R=3.5em{
& \vdots &  & \vdots & \\
\D'   \ar[rr]^{i_{0}}   \ar@/^3pc/[rr]^{i_{2}}   \ar@/_3pc/[rr]^{i_{-2}}   &  &
\D   \ar[rr]^{j_{0}}    \ar@/_1.5pc/[ll]_{j_{1}}     \ar@/^1.5pc/[ll]_{j_{-1}} \ar@/^3pc/[rr]^{j_{2}}  \ar@/_3pc/[rr]^{j_{-2}} & &
\D''    \ar@/_1.5pc/[ll]_{i_{1}}   \ar@/^1.5pc/[ll]_{i_{-1}}\\
& \vdots &  & \vdots &\\
 }
\]
such that any consecutive rows form a recollement (see \cite[Section 3]{AKLY}; \cite[Section 1.5]{BGS}). The height of a ladder is the number of recollements contained in it (counted with multiplicities).

\vskip 10pt

Let $\D$ be a triangulated category with coproducts. A triangulated subcategory closed under arbitrary coproducts is called a localizing subcategory. Given a class $\cS$ of objects of $\D$, denote by $\Loc_{\D}(S)$ the smallest localizing subcategory containing $\cS$. A compact object is an object $M\in \D$ such that $\coprod_{i\in I}{\rm Hom}_{\D}$ $ (M, N_{i})\iso {\rm Hom}_{\D}(M, \coprod_{i\in I}N_{i})$, for each family of objects $(N_{i})_{i\in I}$. We say that $\D$ is compactly generated  when it has a generating set of compact objects. Denote by $\D^{c}$ the subcategory of compact objects of $\D$.

\vskip 5pt

For a class of objects $\cS$ in $\D$, we consider the following subcategories of $\D$:
$$\cS^{\perp}=\{M\in \D\mid {\rm Hom}_{\D}(S, M)=0\ for \ any \ S\in \cS\}$$
and
$$^{\perp}\cS=\{M\in \D\mid {\rm Hom}_{\D}(M, S)=0\ for \ any \ S\in \cS\}.$$
We say that $\cS$ generates a torsion pair in $\D$ if $({^{\perp}(\cS^{\perp})}, \cS^{\perp})$ is a torsion pair.
If moreover $\cS$ is a set of compact objects, $({^{\perp}(\cS^{\perp})}, \cS^{\perp})$ is called compactly generated.

\vskip 10pt

\section{\bf Lifting of TTF triples}

\vskip 10pt

In this section, we investigate the lifting of recollements of bounded Gorenstein derived categories and the stable categories of Gorenstein-projective modules, respectively, where ladder is a crucial tool. Moreover, we give a sufficient and necessary condition on the upper triangular matrix algebra $T_{n}(A)$ to be of finite CM-type for an algebra $A$ of finite CM-type.

\vskip 10pt

\begin{defn}(\cite[Definition 2]{NiS}) \ Let $\D$ be a triangulated category and $\D'$ the full triangulated subcategory of $\D$. We say that a TTF triple $(\U, \V, \W)$ in $\D$ restrict to or is a lifting of a TTF triple $(\U', \V', \W')$ in $\D'$, \  if we have
$$(\U\cap \D', \V'\cap \D', \W'\cap \D')=(\U', \V', \W').$$
\end{defn}

\vskip 10pt

Recall that, given thick subcategories $\Y'\subseteq \Y,\ \D'\subseteq \D$ and $\X'\subseteq \X$ and recollements
\[
\xymatrix@C=0.4cm{
\Y \ar[rrr]^{i_{\ast}} &&&
\D \ar[rrr]^{j^{\ast}}  \ar @/_1.5pc/[lll]_{i^{\ast}}  \ar @/^1.5pc/[lll]^{i^{!}} &&&
\X \ar @/_1.5pc/[lll]_{j_{!}}  \ar @/^1.5pc/[lll]^{j_{\ast}}
}\ \ \ \ \ \
\xymatrix@C=0.4cm{
\Y' \ar[rrr]^{i'_{\ast}} &&&
\D' \ar[rrr]^{j'^{\ast}}  \ar @/_1.5pc/[lll]_{i'^{\ast}}  \ar @/^1.5pc/[lll]^{i'^{!}} &&&
\X' \ar @/_1.5pc/[lll]_{j'_{!}}  \ar @/^1.5pc/[lll]^{j'_{\ast}}
}
\]
We say that the recollement $(\Y\equiv \D\equiv \X)$ restricts, up to equivalence, to the recollement $(\Y'\equiv \D'\equiv \X')$, or that the recollement $(\Y'\equiv \D'\equiv \X')$ lifts, up to equivalence, to the recollement $(\Y\equiv \D\equiv \X)$, when $(\im j_{!}\cap \D', \im i_{\ast}\cap \D', \im j_{\ast}\cap \D')=(\im j'_{!}, \im i'_{\ast}, \im j'_{\ast})$.

\vskip 10pt

Next we are interest in compactly generated triangulated categories. The lemma we give below is implicit in \cite[Theorem 3.3]{SZ}.
In the following, when we say TTF quadruple $(\U, \V, \W, \Z)$, we mean that both $(\U, \V, \W)$ and $(\V, \W, \Z)$ are TTF triples. TTF tuple can be similarly defined.

\vskip 10pt

\begin{lem}\
\label{TTFlift}\
Let
\[
\xymatrix@C=0.5cm{
\Y \ar[rrr]^{i_{\ast}} \ar @/_3.0pc/[rrr]^{i}  &&&
\D \ar[rrr]^{j^{\ast}}  \ar @/_3.0pc/[rrr]^{j}  \ar @/_1.5pc/[lll]_{i^{\ast}}  \ar @/^1.5pc/[lll]_{i^{!}} &&&
\X \ar @/_1.5pc/[lll]_{j_{!}}  \ar @/^1.5pc/[lll]_{j_{\ast}}
 }
\]
be a ladder of recollements of height 2, and $\Y, \D, \X$ the thick subcategories of compactly generated triangulated category $\hat{\Y}, \hat{\D}, \hat{\X}$ which contain the respective subcategories of compact objects.
\begin{enumerate}

\item \ The given recollement can lift to a ladder of recollements of height 3:
\[
\xymatrix@C=0.5cm{
\hat{\Y} \ar[rrr]^{\hat{i_{\ast}}} \ar @/_3.0pc/[rrr]^{\hat{i}}  &&&
\hat{\D} \ar[rrr]^{\hat{j^{\ast}}}  \ar @/_3.0pc/[rrr]^{\hat{j}}  \ar @/_1.5pc/[lll]_{\hat{i^{\ast}}}  \ar @/^1.5pc/[lll]_{\hat{i^{!}}} \ar @/^4.5pc/[lll]_{} &&&
\hat{\X}. \ar @/_1.5pc/[lll]_{\hat{j_{!}}}  \ar @/^1.5pc/[lll]_{\hat{j_{\ast}}} \ar @/^4.5pc/[lll]_{}
 }
\]

\vskip 5pt

\item \ The functors $i^{!}$ and $j_{\ast}$ preserve compact objects, i.e. $i^{!}(\hat{\D}^{c})\subseteq \hat{\Y}^{c}$ and $j_{\ast}(\hat{\X}^{c})\subseteq \hat{\D}^{c}$. Moreover, $\im i_{\ast}$ cogenerates $\Loc_{\hat{\D}}(i_{\ast}(\hat{\Y}^{c}))$ and $\im j_{\ast}$ cogenerates $\Loc_{\hat{\D}}(j_{\ast}(\hat{\X}^{c}))$, or $\D$ cogenerates $\hat{\D}$.
\end{enumerate}
If one of the above two conditions is satisfied, then the TTF quadruple $(\im j_{!}, \im i_{\ast},$ $\im j_{\ast}, \im i)$ lifts to a TTF quadruple $(\U, \V, \W, \Z)$ in $\hat{\D}$ such that:
\vskip 5pt
{\rm (a):} $(\U, \V), (\V, \W)$ and $(\W, \Z)$ are compactly generated;
\vskip 5pt
{\rm (b):} $j_{!}(\hat{\X}^c)= \U\cap \hat{\D}^{c},\ i_{\ast}(\hat{\Y}^{c})= \V\cap \hat{\D}^{c}$ and $j_{\ast}(\hat{\X}^{c})= \W\cap \hat{\D}^{c}$.
\end{lem}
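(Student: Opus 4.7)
The plan is to apply the lifting criterion of Saor\'{\i}n--Zvonareva \cite[Theorem 3.3]{SZ}, which is designed exactly for lifting a TTF triple from a thick subcategory (containing the compacts) to a compactly generated TTF triple in the ambient compactly generated category. Since a TTF quadruple is two overlapping TTF triples, namely $(\im j_{!}, \im i_{\ast}, \im j_{\ast})$ and $(\im i_{\ast}, \im j_{\ast}, \im i)$, the strategy is to lift each of them separately and then check that the two lifts glue compatibly at their shared components to produce the required quadruple $(\U, \V, \W, \Z)$.

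First I would establish the equivalence (1)\,$\Leftrightarrow$\,(2). For (1)\,$\Rightarrow$\,(2), note that in a ladder of height $3$ the functors $i^{!}$ and $j_{\ast}$ are themselves left adjoints (to $i$ and $j$), while $i$ and $j$ in turn acquire \emph{further} right adjoints; hence $i$ and $j$ are simultaneously left and right adjoints, so they preserve all coproducts, and by the standard criterion their left adjoints $i^{!}$ and $j_{\ast}$ preserve compact objects. The cogeneration statements follow because in this fully adjoint setting $\im i_{\ast}$ and $\im j_{\ast}$ are realised as right orthogonals, hence cogenerate the localising subcategories they generate. Conversely, for (2)\,$\Rightarrow$\,(1), Brown representability applied to the compactly generated categories $\hat{\Y}, \hat{\D}, \hat{\X}$ supplies the further right adjoints required by a height-$3$ ladder, since $i^{!}$ and $j_{\ast}$ now preserve coproducts.

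Next I would construct the lifted functors. Using the compact generators of $\hat{\Y}, \hat{\D}, \hat{\X}$, extend $i_{\ast}, j^{\ast}, j_{!}, j_{\ast}$ coproduct-preservingly to $\hat{i}_{\ast}, \hat{j}^{\ast}, \hat{j}_{!}, \hat{j}_{\ast}$, and obtain $\hat{i}^{\ast}, \hat{i}^{!}$ by Brown representability. The recollement axioms --- full faithfulness, the vanishing $\hat{j}^{\ast}\hat{i}_{\ast}=0$, and the localisation triangles --- are natural statements which can be verified on compact objects, and all compact objects lie in the thick subcategories $\Y, \D, \X$ where these axioms are already known. This yields the recollement $(\hat{\Y} \equiv \hat{\D} \equiv \hat{\X})$ and hence the TTF triple $(\U, \V, \W) := (\im \hat{j}_{!}, \im \hat{i}_{\ast}, \im \hat{j}_{\ast})$. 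Applying the same procedure to the shifted recollement produced by the second row of the ladder yields $\Z = \im \hat{i}$; the essential uniqueness of the lift forces the middle components of the two lifted triples to coincide, giving the quadruple $(\U, \V, \W, \Z)$.

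Finally I would verify (a), (b), and the restriction property. Condition (a) is immediate since every torsion pair in the quadruple arises from a coproduct-preserving functor applied to a compactly generated category, with compact generators $\hat{j}_{!}(\hat{\X}^{c}), \hat{i}_{\ast}(\hat{\Y}^{c}), \hat{j}_{\ast}(\hat{\X}^{c})$, and $\hat{i}(\hat{\Y}^{c})$. Condition (b) reduces, via Neeman's description of compact objects in a localising subcategory compactly generated by a set of compacts, to the compact preservation of $\hat{j}_{!}, \hat{i}_{\ast}, \hat{j}_{\ast}$, which in turn is either assumed directly (for $\hat{i}_{\ast}, \hat{j}_{\ast}$, using (2)) or follows by adjunction (for $\hat{j}_{!}$, using that $\hat{j}^{\ast}$ preserves coproducts). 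The main obstacle I anticipate is the restriction property: showing that $\U \cap \D = \im j_{!}$, $\V \cap \D = \im i_{\ast}$, $\W \cap \D = \im j_{\ast}$, and $\Z \cap \D = \im i$. This requires combining compact generation of the torsion pairs in $\hat{\D}$ with the key fact that $\D$ contains all of $\hat{\D}^{c}$, so that membership in any of $\U, \V, \W, \Z$ can be detected by ${\rm Hom}$-vanishing against compact objects that already live inside $\D$; the result then follows by comparing with the orthogonality characterisations of $\im j_{!}, \im i_{\ast}, \im j_{\ast}, \im i$ in the original recollement.
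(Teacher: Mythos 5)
Your approach coincides with the paper's: the proof is a direct application of Saor\'{\i}n--Zvonareva \cite[Theorem 3.3]{SZ} to the two overlapping TTF triples $(\im j_{!}, \im i_{\ast}, \im j_{\ast})$ and $(\im i_{\ast}, \im j_{\ast}, \im i)$ coming from the two recollements in the height-2 ladder, and the paper's entire proof is exactly this one-sentence citation. The extra machinery you sketch (reproving the SZ criterion via Brown representability, and checking that the two lifted TTF triples agree in their shared middle components $\V$ and $\W$) is not needed once one cites \cite[Theorem 3.3]{SZ} directly, though your gluing observation is a genuine point the paper leaves implicit --- it holds because a compactly generated torsion pair is uniquely determined by the set of compact objects in its aisle.
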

\begin{proof}\ It can be seen from \cite[Theorem 3.3]{SZ} that when condition (1) or (2) is satisfied, there are two lifted TTF triples $(\U, \V, \W)$ and $(\V, \W, \Z)$ in $\hat{\D}$, such that (a) and (b) hold.
\end{proof}

\vskip 10pt

\begin{thm}\
\label{fg}
Let $A$ be a Gorenstein algebra, and $\lambda: A\lxr B$ a homological ring epimorphism which induces a recollement of derived categories \[
\xymatrix@C=0.4cm{
D(B\mbox{-}{\rm Mod}) \ar[rrr]^{i_{\ast}} &&&
D(A\mbox{-}{\rm Mod}) \ar[rrr]^{j^{\ast}}  \ar @/_1.5pc/[lll]_{i^{\ast}}  \ar @/^1.5pc/[lll]_{i^{!}} &&&
D(C\mbox{-}{\rm Mod}) \ar @/_1.5pc/[lll]_{j_{!}}  \ar @/^1.5pc/[lll]_{j_{\ast}}
}
\]
\vskip 5pt
\noindent \ of algebras $B,\ A$ and $C$ such that $j_{!}$ restricts to $D^{b}(C\mbox{-}{\rm mod})$. Assume that ${\rm pd}{_{A}B}<\infty$, then the TTF tuple in $A\mbox{-}\underline{{\rm Gproj}}$ lifts to a TTF tuple in $A\mbox{-}\underline{{\rm GP}}$.
\end{thm}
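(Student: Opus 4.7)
The plan is to invoke Lemma~\ref{TTFlift} to lift the TTF tuple from the small stable categories to the big ones, so the work splits into (i) producing a height-$2$ ladder at the finitely generated level, and (ii) verifying that the ambient categories $B\mbox{-}\underline{\rm GP}$, $A\mbox{-}\underline{\rm GP}$, $C\mbox{-}\underline{\rm GP}$ are compactly generated with compact objects exactly the finitely generated stable categories.

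First I would use the Gao--Xu result \cite{GX}: since $\lambda:A\to B$ is a homological ring epimorphism that induces a recollement of unbounded derived categories and since $j_!$ restricts to $D^{b}(C\mbox{-}{\rm mod})$, there is a ladder of the stable categories of finitely generated Gorenstein-projective modules
\[
\xymatrix@C=0.5cm{
B\mbox{-}\underline{\rm Gproj} \ar[rrr] \ar@/_3.0pc/[rrr] &&&
A\mbox{-}\underline{\rm Gproj} \ar[rrr] \ar@/_3.0pc/[rrr] \ar@/_1.5pc/[lll] \ar@/^1.5pc/[lll] &&&
C\mbox{-}\underline{\rm Gproj}. \ar@/_1.5pc/[lll] \ar@/^1.5pc/[lll]
}
\]
In particular, letting $(\U',\V',\W',\Z')$ denote the associated TTF tuple in $A\mbox{-}\underline{\rm Gproj}$, we have the starting data for an application of Lemma~\ref{TTFlift}.

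Next I would check the ambient compactness hypothesis. Because $A$ is Gorenstein, a theorem of Beligiannis (cf.~\cite{B11}) asserts that $A\mbox{-}\underline{\rm GP}$ is compactly generated with $(A\mbox{-}\underline{\rm GP})^{c}\simeq A\mbox{-}\underline{\rm Gproj}$, and $A\mbox{-}\underline{\rm Gproj}$ is a thick subcategory. The hypothesis $\pd{}_{A}B<\infty$ transfers the Gorenstein property to $B$, and via the restriction of $j_!$ to $D^{b}(C\mbox{-}{\rm mod})$ one deduces that $C$ is Gorenstein (or at least virtually Gorenstein) as well, so the analogous statements hold for $B\mbox{-}\underline{\rm GP}$ and $C\mbox{-}\underline{\rm GP}$. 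Thus each of $B\mbox{-}\underline{\rm Gproj}, A\mbox{-}\underline{\rm Gproj}, C\mbox{-}\underline{\rm Gproj}$ is a thick subcategory of a compactly generated triangulated category containing its subcategory of compact objects, which is precisely condition~(2) of Lemma~\ref{TTFlift} (after noting that $A\mbox{-}\underline{\rm Gproj}$ cogenerates $A\mbox{-}\underline{\rm GP}$ since the compact objects generate).

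Finally I would apply Lemma~\ref{TTFlift} to the height-$2$ ladder of Step 1: it delivers a TTF quadruple $(\U,\V,\W,\Z)$ in $A\mbox{-}\underline{\rm GP}$ such that $(\U,\V),(\V,\W),(\W,\Z)$ are compactly generated and whose intersections with $A\mbox{-}\underline{\rm GP}^{c}\simeq A\mbox{-}\underline{\rm Gproj}$ recover $(\U',\V',\W',\Z')$; this is exactly the desired lifting of the TTF tuple. The main obstacle I anticipate is verifying the Gorenstein (or virtually Gorenstein) property of $C$ purely from the hypotheses of the theorem: $A$ Gorenstein and $\pd{}_{A}B<\infty$ handle $A$ and $B$ directly, but $C$ needs to be controlled through the recollement, and one has to combine the restriction of $j_!$ to $D^{b}(C\mbox{-}{\rm mod})$ with the homological ring epimorphism to obtain that $C\mbox{-}\underline{\rm GP}$ is compactly generated with the correct compacts, so that Lemma~\ref{TTFlift} actually applies across all three vertices of the ladder.
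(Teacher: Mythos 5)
Your overall strategy matches the paper's: invoke the Gao--Xu ladder at the finitely generated level and then apply Lemma~\ref{TTFlift}. However, there are two concrete gaps in how you check the hypotheses of that lemma. First, your justification that $A\mbox{-}\underline{\rm Gproj}$ cogenerates $A\mbox{-}\underline{\rm GP}$ --- ``since the compact objects generate'' --- is not valid: generation (vanishing of the right orthogonal of the compacts) does not imply cogeneration (vanishing of the left orthogonal). The paper instead establishes cogeneration by observing that $A\mbox{-}\underline{\rm Gproj}\subseteq (A\mbox{-}\underline{\rm GP})^{b}$ and citing \cite[Corollary 3.12]{SZ}, which is a genuinely different and necessary argument. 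Second, you have misread condition~(2) of Lemma~\ref{TTFlift}: that condition is not the standing hypothesis that $\Y,\D,\X$ sit inside compactly generated ambient categories, but rather the requirement that $i^{!}$ and $j_{\ast}$ preserve compact objects (together with the cogeneration clause). The paper verifies the compact-preservation part by noting that the ladder supplies right adjoints $i_{@}$, $j^{@}$ to $i^{!}$, $j_{\ast}$, hence the lifted versions preserve coproducts and therefore their left adjoints preserve compacts; your write-up skips this step entirely.

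On the positive side, you correctly flag a subtlety the paper leaves implicit: Lemma~\ref{TTFlift} needs all three ambient categories $B\mbox{-}\underline{\rm GP}$, $A\mbox{-}\underline{\rm GP}$, $C\mbox{-}\underline{\rm GP}$ to be compactly generated with the right compacts, and the paper only invokes Chen's theorem \cite[Theorem~4.1]{Ch11} for $A$. Your worry about how to control $C$ through the recollement is legitimate, and if you want to complete the argument along your lines you would need to extract the Gorenstein (or virtually Gorenstein of finite CM-type) property of $B$ and $C$ from the hypotheses before applying \cite{Ch11} (or the Beligiannis result you cite). As written, though, the proof cannot stand without repairing the cogeneration argument and supplying the compact-preservation check.
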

\begin{proof}\ Following \cite[Theorem 3.1]{GX}, there is an unbounded ladder
\[
\xymatrix@C=3em@R=3.5em{
& \vdots &  & \vdots & \\
B\mbox{-}\underline{{\rm Gproj}}   \ar[rr]^{i_{\ast}}   \ar@/^3pc/[rr]^{i_{?}}   \ar@/_3pc/[rr]^{i_{@}}   &  &
A\mbox{-}\underline{{\rm Gproj}}   \ar[rr]^{j^{\ast}}    \ar@/_1.5pc/[ll]_{i^{\ast}}     \ar@/^1.5pc/[ll]_{i^{!}} \ar@/^3pc/[rr]^{j^{?}}  \ar@/_3pc/[rr]^{j^{@}} & &
C\mbox{-}\underline{{\rm Gproj}}    \ar@/_1.5pc/[ll]_{j_{!}}   \ar@/^1.5pc/[ll]_{j_{\ast}}\\
& \vdots &  & \vdots &\\
 }
\]
Since the functors $i_{@}$ and $j^{@}$ in this ladder have right adjoints, they preserve coproducts, and  so the functors $i^{!}$ and $j_{\ast}$ preserve compact objects. Furthermore, it follows from \cite[Theorem 4.1]{Ch11} that $A\mbox{-}\underline{{\rm GP}}$ is a compactly generated triangulated category, and $(A\mbox{-}\underline{{\rm GP}})^{c}=A\mbox{-}\underline{{\rm Gproj}}$. For
$$(A\mbox{-}\underline{{\rm GP}})^{b}:=\{M\mid {\rm Hom}_{A\mbox{-}\underline{{\rm GP}}}(X, M[k])=0 \ for \ each\ X\in A\mbox{-}\underline{{\rm Gproj}} \ and \ |k|\gg 0\},$$
we have that $A\mbox{-}\underline{{\rm Gproj}}\subseteq (A\mbox{-}\underline{{\rm GP}})^{b}$. This means  from \cite[Corollary 3.12]{SZ} that $A\mbox{-}\underline{{\rm Gproj}}$ cogenerates $A\mbox{-}\underline{{\rm GP}}$. By Lemma~\ref{TTFlift}, we have that the TTF quadruple $(\im j_{!}, \im i_{\ast},$ $\im j_{\ast}, \im i_{@})$ lifts to a TTF quadruple $(\U, \V, \W, \Z)$ in $A\mbox{-}\underline{{\rm GP}}$ such that (a) and (b) hold. Consequently, we get the lifting of TTF tuple in the unbounded ladder.
\end{proof}

\vskip 10pt

The lemma given below will be very useful.

\vskip 10pt

\begin{lem}{\rm (\cite[Corollary 3.5]{SZ})}\
\label{compact}
Let $\Y,\ \D$ and $\X$ be compactly generated triangulated categories and suppose that we have a recollement
\[
\xymatrix@C=0.4cm{
\Y^{c} \ar[rrr]^{i_{\ast}} &&&
\D^{c} \ar[rrr]^{j^{\ast}}  \ar @/_1.5pc/[lll]_{i^{\ast}}  \ar @/^1.5pc/[lll]^{i^{!}} &&&
\X^{c}. \ar @/_1.5pc/[lll]_{j_{!}}  \ar @/^1.5pc/[lll]^{j_{\ast}}
}
\]
Then the TTF triple $(\im j_{!}, \im i_{\ast}, \im j_{\ast})$ in $\D^{c}$ lifts to a TTF triple $(\U, \V, \W)$ in $\D$, where the torsion pairs $(\U, \V)$ and $(\V, \W)$ are compactly generated.
\end{lem}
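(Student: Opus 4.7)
The plan is to lift the given TTF triple by taking, for each of its three pieces, the smallest localizing subcategory of $\D$ generated by that piece, and then to verify that the resulting three subcategories again form a TTF triple. The central tool is the classical fact that, in a compactly generated triangulated category, any set $\cS$ of compact objects generates a compactly generated torsion pair $({}^{\perp}(\cS^{\perp}), \cS^{\perp})$, in which the first term coincides with $\Loc_{\D}(\cS)$.

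Concretely, I would set $\cS_{1} := j_{!}(\X^{c})$, $\cS_{2} := i_{\ast}(\Y^{c})$, and $\cS_{3} := j_{\ast}(\X^{c})$; since the given recollement is a recollement on $\D^{c}$, all three are sets of compact objects in $\D$. Define
$$\U := \Loc_{\D}(\cS_{1}), \qquad \V := \Loc_{\D}(\cS_{2}), \qquad \W := \cS_{2}^{\perp}.$$
Applying the above general fact to $\cS_{1}$ and to $\cS_{2}$ yields two compactly generated torsion pairs, namely $(\U, \cS_{1}^{\perp})$ and $(\V, \W)$. To assemble them into a TTF triple it remains to establish the middle-coincidence $\cS_{1}^{\perp} = \V$.

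For the inclusion $\V \subseteq \cS_{1}^{\perp}$, I would use the recollement identity $j^{\ast}i_{\ast} = 0$ on $\D^{c}$: for any $X \in \X^{c}$ and $Y \in \Y^{c}$, adjunction gives
$${\rm Hom}_{\D}(j_{!}X, i_{\ast}Y) \iso {\rm Hom}_{\X^{c}}(X, j^{\ast}i_{\ast}Y) = 0,$$
so $\cS_{2} \subseteq \cS_{1}^{\perp}$; since $\cS_{1}$ consists of compact objects, $\cS_{1}^{\perp}$ is localizing and hence contains $\V$. For the reverse inclusion, I would take $Z \in \cS_{1}^{\perp}$, apply the torsion pair $(\V, \W)$ to obtain a triangle $V \lxr Z \lxr W \lxr V[1]$ with $V \in \V$ and $W \in \W$, and exploit the inclusion $\V \subseteq \cS_{1}^{\perp}$ to deduce $W \in \cS_{1}^{\perp} \cap \W$; a density argument based on the recollement triangles $j_{!}j^{\ast}M \lxr M \lxr i_{\ast}i^{\ast}M \lxr$ on $\D^{c}$ (which express every compact object of $\D$ as an extension of objects of $\cS_{1}$ and $\cS_{2}$) then forces $W = 0$.

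The main obstacle is precisely this middle-coincidence $\V = \cS_{1}^{\perp}$, which is what welds the two compactly generated torsion pairs into a single TTF triple. Once it is in hand, the restriction property is immediate: since each $\cS_{k}$ is a set of compact generators of a thick subcategory already present in $\D^{c}$, one has $\U \cap \D^{c} = {\rm thick}_{\D^{c}}(\cS_{1}) = \im j_{!}$, and analogously $\V \cap \D^{c} = \im i_{\ast}$ and $\W \cap \D^{c} = \im j_{\ast}$. Hence $(\U, \V, \W)$ is a TTF triple in $\D$ lifting the given one, and by construction $(\U, \V)$ and $(\V, \W)$ are compactly generated.
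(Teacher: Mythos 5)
Your argument is correct, and it reconstructs the standard proof of this result; the paper itself supplies no proof but simply cites \cite[Corollary 3.5]{SZ}, and the route you take --- defining $\U=\Loc_{\D}(j_!(\X^c))$ and $\V=\Loc_{\D}(i_\ast(\Y^c))$, identifying $\V$ with $j_!(\X^c)^\perp$ via the density argument from the recollement triangles of compacts, and then invoking Neeman's localization theorem $\Loc_{\D}(\cS)\cap\D^c=\operatorname{thick}_{\D^c}(\cS)$ to get the restriction property --- is essentially the one used there. The only thing worth flagging is that the last step (restriction to $\D^c$) quietly leans on that nontrivial localization theorem, which you should cite explicitly rather than treat as immediate.
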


\vskip 10pt

Immediately, we can draw the following conclusions.

\vskip 5pt

\begin{cor}\ Let $\Lambda=\begin{pmatrix}
A & N \\
0 & B \\
\end{pmatrix}$ be a Gorenstein Artin algebra such that $M$ is a projective $A$-module. Then the TTF triple in $\La\mbox{-}\underline{{\rm Gproj}}$ can lift to a TTF triple in $\La\mbox{-}\underline{{\rm GP}}$.
\end{cor}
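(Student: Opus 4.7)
The plan is to reduce the statement to a direct application of Lemma~\ref{compact}. Write $M=N$ (treating the occurrence of ``$M$'' as a typo for the bimodule $N$ of the upper triangular matrix algebra $\Lambda$). The first step is to establish the input recollement at the compact level:
\[
\xymatrix@C=0.4cm{
B\mbox{-}\underline{\mathrm{Gproj}} \ar[rrr] &&&
\Lambda\mbox{-}\underline{\mathrm{Gproj}} \ar[rrr] \ar @/_1.5pc/[lll]  \ar @/^1.5pc/[lll] &&&
A\mbox{-}\underline{\mathrm{Gproj}}. \ar @/_1.5pc/[lll]  \ar @/^1.5pc/[lll]
}
\]
This is exactly Zhang's description \cite{Zh13} of the recollement of stable categories of finitely generated Gorenstein-projective modules over an upper triangular matrix algebra, whose hypothesis is that $N$ be projective as a one-sided module; under our assumption ($N$ projective over $A$) this applies. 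Since $\Lambda$ is assumed Gorenstein, so are its corner algebras $A$ and $B$, which guarantees that the source, middle, and target categories are all well-behaved stable categories of finitely generated Gorenstein-projective modules.

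Next I would pass from the compact world to the ``big'' world. By \cite[Theorem 4.1]{Ch11}, already invoked in the proof of Theorem~\ref{fg}, for any Gorenstein Artin algebra $R$ the category $R\mbox{-}\underline{\mathrm{GP}}$ is compactly generated with
\[
(R\mbox{-}\underline{\mathrm{GP}})^{c} \;=\; R\mbox{-}\underline{\mathrm{Gproj}}.
\]
Applying this to $R=A,\,B,\,\Lambda$ shows that the recollement above is precisely a recollement of the subcategories of compact objects of the three compactly generated triangulated categories $A\mbox{-}\underline{\mathrm{GP}}$, $\Lambda\mbox{-}\underline{\mathrm{GP}}$, $B\mbox{-}\underline{\mathrm{GP}}$.

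Now I would invoke Lemma~\ref{compact} (the Saor\'{\i}n--Zvonareva criterion) verbatim: given a recollement of $(\Y^c,\D^c,\X^c)$ between compactly generated triangulated categories $\Y,\D,\X$, the TTF triple $(\mathrm{Im}\,j_{!},\,\mathrm{Im}\,i_{\ast},\,\mathrm{Im}\,j_{\ast})$ in $\D^c$ lifts to a TTF triple $(\U,\V,\W)$ in $\D$, with $(\U,\V)$ and $(\V,\W)$ compactly generated. Specialising $(\Y,\D,\X) = (B\mbox{-}\underline{\mathrm{GP}},\,\Lambda\mbox{-}\underline{\mathrm{GP}},\,A\mbox{-}\underline{\mathrm{GP}})$ yields exactly the required TTF triple in $\Lambda\mbox{-}\underline{\mathrm{GP}}$ that restricts to the given TTF triple in $\Lambda\mbox{-}\underline{\mathrm{Gproj}}$.

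The only place that requires genuine care is the first step: producing the compact-level recollement. Once we know $N$ is projective over $A$ and $\Lambda$ is Gorenstein, one must check that Zhang's construction outputs a recollement rather than merely a one-sided adjoint triple; this is where the Gorenstein hypothesis on $\Lambda$ (ensuring the relevant Gorenstein-projective resolutions exist and behave functorially with respect to the ``triangular'' decomposition of $\Lambda$-modules into pairs $(X,Y,\phi:N\otimes_{B}Y\to X)$) is used. The remainder of the argument is a bookkeeping application of the compact-generation results already quoted in the proof of Theorem~\ref{fg}.
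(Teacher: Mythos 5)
Your proposal takes essentially the same three-step approach as the paper: establish Zhang's recollement of the stable categories of finitely generated Gorenstein-projective modules via \cite[Theorem 3.5]{Zh13}, use \cite[Theorem 4.1]{Ch11} to identify each big stable category $R\mbox{-}\underline{{\rm GP}}$ as compactly generated with $(R\mbox{-}\underline{{\rm GP}})^{c}=R\mbox{-}\underline{{\rm Gproj}}$, and then apply Lemma~\ref{compact}. The one inaccuracy is the orientation of the input recollement: when $N$ is projective as a left $A$-module, the recollement in Zhang's theorem (and in the paper's proof) has $A\mbox{-}\underline{{\rm Gproj}}$ as the closed term (the image of $i_{\ast}$, via $X\mapsto(X,0,0)$) and $B\mbox{-}\underline{{\rm Gproj}}$ as the open quotient, whereas you have these swapped. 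You also leave implicit the appeal to \cite[Theorem 2.2]{Zh13} for the fact that $A$ and $B$ inherit the Gorenstein property from $\Lambda$, which is what licenses the use of Chen's theorem on all three algebras. Neither point changes the structure of the argument or the validity of the conclusion.
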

\begin{proof}\  On one hand, since $\La$ is Gorenstein and  $M$ is $A$-projective, we have from \cite[Theorem 2.2]{Zh13} that $A$ and $B$ are Gorenstein. Furthermore, it follows from \cite[Theorem 4.1]{Ch11} that $\La\mbox{-}\underline{{\rm GP}}$,  $A\mbox{-}\underline{{\rm GP}}$ and $B\mbox{-}\underline{{\rm GP}}$ are  compactly generated triangulated categories, and moreover, we have the following equalities:
$$(\La\mbox{-}\underline{{\rm GP}})^{c}=\La\mbox{-}\underline{{\rm Gproj}}, \ (A\mbox{-}\underline{{\rm GP}})^{c}=A\mbox{-}\underline{{\rm Gproj}},\ (B\mbox{-}\underline{{\rm GP}})^{c}=B\mbox{-}\underline{{\rm Gproj}}.$$

\vskip 10pt

On the other hand, there exists a recollement by \cite[Theorem 3.5]{Zh13} as follows:
\[
\xymatrix@C=0.5cm{
\ A\mbox{-}\underline{{\rm Gproj}} \ar[rrr]^{i_{\ast}} &&&
\La\mbox{-}\underline{{\rm Gproj}} \ar[rrr]^{j^{\ast}}  \ar @/_1.5pc/[lll]_{i^{\ast}}  \ar @/^1.5pc/[lll]^{i^{!}} &&&
B\mbox{-}\underline{{\rm Gproj}}. \ar @/_1.5pc/[lll]_{j_{!}}  \ar @/^1.5pc/[lll]^{j_{\ast}}
 }
\]
Thus we get from Lemma~\ref{compact} that the TTF triple $(\im j_{!}, \im i_{\ast}, \im j_{\ast})$ in $\La\mbox{-}\underline{{\rm Gproj}}$ lifts to a TTF triple $(\U, \V, \W)$ in $\La\mbox{-}\underline{{\rm GP}}$, and that the torsion pairs $(\U, \V)$ and $(\V, \W)$ are compactly generated.
\end{proof}

\vskip 10pt

\begin{lem}\
\label{finiteCM}\
Let $A, B$ and $C$ be virtually Gorenstein algebras of finite CM-type. Assume that $D_{gp}^{b}(A)$ admits the following recollement:
\[
\xymatrix@C=0.5cm{
\ D_{gp}^{b}(B) \ar[rrr]^{i_{\ast}} &&&
D_{gp}^{b}(A) \ar[rrr]^{j^{\ast}}  \ar @/_1.5pc/[lll]_{i^{\ast}}  \ar @/^1.5pc/[lll]^{i^{!}} &&&
D_{gp}^{b}(C). \ar @/_1.5pc/[lll]_{j_{!}}  \ar @/^1.5pc/[lll]^{j_{\ast}}
 }
\]
Then the TTF triple $(\im j_{!}, \im i_{\ast}, \im j_{\ast})$ in $D_{gp}^{b}(A)$ lifts to a TTF triple $(\U, \V, \W)$ in $K(A\mbox{-}{\rm GP})$.
\end{lem}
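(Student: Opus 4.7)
The strategy is to reduce the statement to a direct application of \lemref{compact}, with the compactly generated ambient categories chosen as $K(A\mbox{-}\GP)$, $K(B\mbox{-}\GP)$ and $K(C\mbox{-}\GP)$, and with their subcategories of compact objects identified with $D_{gp}^{b}(A)$, $D_{gp}^{b}(B)$ and $D_{gp}^{b}(C)$ respectively. The first step is therefore to verify, for each $\La\in\{A,B,C\}$, that $K(\La\mbox{-}\GP)$ is compactly generated and that $K(\La\mbox{-}\GP)^{c}\simeq D^{b}_{gp}(\La)$ as triangulated categories.

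Since $\La$ is virtually Gorenstein, a theorem of Beligiannis--Reiten gives $\La\mbox{-}\GP={\rm Add}(\La\mbox{-}\Gproj)$; combined with the finite CM-type hypothesis, this forces $\La\mbox{-}\GP={\rm Add}(G)$ for a finitely generated Gorenstein-projective generator $G$ (the direct sum of a representative set of indecomposables). From here, one obtains that $K(\La\mbox{-}\GP)$ is compactly generated with subcategory of compact objects equivalent to $K^{b}(\La\mbox{-}\Gproj)$. Next, I would identify $K^{b}(\La\mbox{-}\Gproj)$ with $D_{gp}^{b}(\La)$ via the natural functor induced by the inclusion $\La\mbox{-}\Gproj\hookrightarrow \La\mbox{-}\cmod$ followed by the Verdier localization. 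Full faithfulness is a formal consequence of the fact that $\Gproj$-acyclic bounded complexes with entries in ${\rm add}(G)$ are null-homotopic (detected by ${\rm Hom}_{\La}(G,-)$ since $G$ is an additive generator), while essential surjectivity comes from the existence of bounded Gorenstein-projective resolutions for every object of $D^{b}_{gp}(\La)$, which in turn is guaranteed by the virtually Gorenstein plus CM-finite assumptions.

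With these identifications in place, the given recollement of $D_{gp}^{b}(A)$ by $D_{gp}^{b}(B)$ and $D_{gp}^{b}(C)$ becomes a recollement at the level of compact objects of the compactly generated triangulated categories $K(B\mbox{-}\GP)$, $K(A\mbox{-}\GP)$ and $K(C\mbox{-}\GP)$. Applying \lemref{compact} then lifts the TTF triple $(\im j_{!},\im i_{\ast},\im j_{\ast})$ to a TTF triple $(\U,\V,\W)$ in $K(A\mbox{-}\GP)$, as required, with the additional bonus that the torsion pairs $(\U,\V)$ and $(\V,\W)$ are compactly generated. I expect the principal technical hurdle to be establishing the equivalence $K^{b}(\La\mbox{-}\Gproj)\simeq D_{gp}^{b}(\La)$, and specifically its essential surjectivity, which requires a careful interplay between the virtually Gorenstein condition (used to control $\Gproj$-acyclic complexes and approximations) and the CM-finite condition (which keeps the Gorenstein-projective side under finitely-generated control so that bounded resolutions can actually be produced).
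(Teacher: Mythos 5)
Your proposal takes precisely the same route as the paper: establish that $K(\La\mbox{-}{\rm GP})$ is compactly generated with $K(\La\mbox{-}{\rm GP})^{c}\cong D_{gp}^{b}(\La)$ for each $\La\in\{A,B,C\}$, and then apply Lemma~\ref{compact}. The paper obtains both facts in one stroke by citing \cite[Theorem~3.2]{G15}, whereas you re-derive them from the virtually Gorenstein and CM-finite hypotheses; that is legitimate, but be aware that your sketch leaves the two genuinely nontrivial steps unjustified, namely the compact generation of $K({\rm Add}(G))$ (which proceeds via the equivalence ${\rm Add}(G)\simeq {\rm Proj}({\rm End}_{\La}(G)^{\rm op})$ and the Jorgensen--Neeman description of $K({\rm Proj})$ over a coherent ring) and the essential surjectivity of $K^{b}(\La\mbox{-}{\rm Gproj})\to D_{gp}^{b}(\La)$, and those are exactly the content of the theorem the paper cites.
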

\begin{proof}\ Since $A, B$ and $C$ are virtually Gorenstein algebras of finite CM-type, we get from \cite[Theorem 3.2]{G15} that three homotopy categories $K(A\mbox{-}{\rm GP}), \ K(B\mbox{-}{\rm GP})$ and $K(C\mbox{-}{\rm GP})$ are compactly generated, and moreover,
$$(K(A\mbox{-}{\rm GP}))^{c}\cong D_{gp}^{b}(A), \ \ \ (K(B\mbox{-}{\rm GP}))^{c}\cong D_{gp}^{b}(B)$$
and
$$(K(C\mbox{-}{\rm GP}))^{c}\cong D_{gp}^{b}(C).$$
Thus the result follows from Lemma~\ref{compact}.
\end{proof}

\vskip 10pt

\begin{exam}\
\label{upper}
Let $A$ be a Gorenstein algebra of finite CM-type, and $T_{n}(A)$ the upper triangular matrix algebra of $A$. Then the following hold.
\begin{enumerate}
\item \ $D_{gp}^{b}(T_{n}(A))$ admits a recollement:
\[
\xymatrix@C=0.5cm{
\ D_{gp}^{b}(T_{n-1}(A)) \ar[rrr]^{i_{\ast}} &&&
D_{gp}^{b}(T_{n}(A)) \ar[rrr]^{j^{\ast}}  \ar @/_1.5pc/[lll]_{i^{\ast}}  \ar @/^1.5pc/[lll]^{i^{!}} &&&
D_{gp}^{b}(A). \ar @/_1.5pc/[lll]_{j_{!}}  \ar @/^1.5pc/[lll]^{j_{\ast}}
 }
\]

\vskip 5pt

\item \  The TTF triple $(\im j_{!}, \im i_{\ast}, \im j_{\ast})$ in $D_{gp}^{b}(T_{n}(A))$ lifts to a TTF triple $(\U, \V, \W)$ in $K(T_{n}(A)\mbox{-}{\rm GP})$.
\end{enumerate}
\end{exam}
\begin{proof}\ (1)\ Since $T_{n}(A)$ admits a ladder of recollements of l-height 2 and r-height 4 as follows:
\[
\xymatrix@C=0.5cm{
T_{n-1}(R)\mbox{-}{\rm mod} \ar[rrr]^{\mi} &&&
T_{n}(R)\mbox{-}{\rm mod} \ar @/_3.0pc/[rrr]^{\mr^1} \ar[rrr]^{\me} \ar @/^3.0pc/[rrr]^{\ml^1} \ar @/_1.5pc/[lll]_{\mq}  \ar @/^1.5pc/[lll]^{\map} \ar @/_4.5pc/[rrr]^{\mr^3} &&&
R\mbox{-}{\rm mod}, \ar @/^4.5pc/[lll]_{\mr^2} \ar @/_1.5pc/[lll]_{\ml} \ar @/^1.5pc/[lll]_{\mr}
 }
\]
such that $\mq$ and $\mi$ are exact. It follows from \cite[Theorem 7.4]{GKP} that $\ml,\ \me$ and $\mr$ preserve Gorenstein-projective modules. Moreover, by definitions of $\mq,\ \mi$ and $\map$, we have that they preserve Gorenstein projective modules. It induces the following recollement of Gorenstein derived categories:
\[
\xymatrix@C=0.5cm{
D_{gp}^{b}(T_{n-1}(A)) \ar[rrr]^{D^{b}(\mi)} &&&
D_{gp}^{b}(T_{n}(A)) \ar[rrr]^{D^{b}(\me)}  \ar @/_1.5pc/[lll]_{D^{b}(\mq)}  \ar @/^1.5pc/[lll]^{D^{b}(\map)} &&&
D_{gp}^{b}(A). \ar @/_1.5pc/[lll]_{D^{b}(\ml)}  \ar @/^1.5pc/[lll]^{D^{b}(\mr)}
 }
\]

\vskip 5pt

(2)\ Since $A$ is Gorenstein, it follows that $T_{n}(A)$ and $T_{n-1}(A)$ are Gorenstein. Since $A$ is CM-finite, it follows from \cite[Example 3.2]{EHSL} that $T_{n}(A)$ and $T_{n-1}(A)$ are CM-finite. Thus we can get from Lemma~\ref{finiteCM} that the TTF triple in $D_{gp}^{b}(T_{n}(A))$ can lift to a TTF triple in $K(T_{n}(A)\mbox{-}{\rm GP})$.
\end{proof}

\vskip 10pt

In fact, we will give a criterion for the upper triangulated matrix algebra $T_{n}(A)$ to be of finite CM-type for the Gorenstein algebra $A$. Before this, we need to make some preparations.

\vskip 10pt

\noindent{\bf Construction.}\ \ By an $(A\mbox{-}{\rm Gproj})\mbox{-}$module, we mean an additive contravariant functor $F: A\mbox{-}{\rm Gproj}\lxr \Ab$, where $\Ab$ denotes the category of abelian groups.

\vskip 10pt

We say that $X$ lies in the homotopy equivalence classes of $Y$, which means that the projective resolutions of $X$ and $Y$ are homotopy equivalent. We denote by $\cht(A\mbox{-}{\rm mod})$ the category of the homotopy equivalence classes of $A$-modules. Put
$${\pGp}^{\leq n}(A):=\{X\in \cht(A\mbox{-}{\rm mod})\mid \exists \ a \ proper \ Gorenstein\mbox{-}\ projective \ resolution$$
$$0\to G_{n}\to G_{n-1}\to \cdots \to G_{0}\to X\to 0 \}$$
and
$${\pd}^{\leq n}((A\mbox{-}{\rm Gproj})\mbox{-}{\rm mod}):=\{F\in \cht((A\mbox{-}{\rm Gproj})\mbox{-}{\rm mod})\mid \exists \ a \ projective \ resolution$$
$$0\to (-, G_{n})\to (-, G_{n-1})\to \cdots \to (-, G_{0})\to F\to 0 \}.$$

\vskip 10pt

Let $X\in {\pGp}^{\leq n}(A)$. Consider the proper Gorenstein-projective resolution of $X$:
$$0\lxr G_{n}\stackrel{d_{n}}{\lxr} G_{n-1}\lxr \cdots \lxr G_{1}\stackrel{d_{1}}{\lxr} G_{0}\lxr X\lxr 0.$$
Then we  get the following exact sequence:
$$0\lxr (-, G_{n})\stackrel{(-, d_{n})}{\lxr} (-, G_{n-1})\lxr \cdots \lxr (-, G_{1})\stackrel{(-, d_{1})}{\lxr} (-, G_{0})\lxr F_{X}\lxr 0.$$
This means that $F_{X}\in {\pd}^{\leq n}((A\mbox{-}{\rm Gproj})\mbox{-}{\rm mod})$. Now we define
$$\F: {\pGp}^{\leq n}(A)\lxr {\pd}^{\leq n}((A\mbox{-}{\rm Gproj})\mbox{-}{\rm mod})$$
given by $\F (X)=F_{X}$. For a morphism $\alpha: X\lxr X^{\prime}$ in ${\pGp}^{\leq n}(A)$, there is the following commutative diagram
\[
\xymatrix{
0\ar[r]^{} & G_{n}\ar[r]^{d_{n}}\ar[d]_{\alpha_{n}} & G_{n-1}\ar[r]^{d_{n-1}}\ar[d]_{\alpha_{n-1}} & \cdots\ar[r] & G_{1}\ar[r]^{d_{1}}\ar[d]_{\alpha_{1}} & G_{0}\ar[r]^{}\ar[d]_{\alpha_{0}} & X\ar[r]^{}\ar[d]_{\alpha} & 0 \\
0\ar[r]^{} & G_{n}^{\prime}\ar[r]^{d_{n}^{\prime}} & G_{n-1}^{\prime}\ar[r]^{d_{n-1}^{\prime}} & \cdots\ar[r] & G_{1}^{\prime}\ar[r]^{d_{1}^{\prime}} & G_{0}^{\prime}\ar[r]^{} & X^{\prime}\ar[r]^{} & 0. }
\]
We define $\F (\alpha): F_{X}\lxr F_{X^{\prime}}$ to be the unique morphism $\overline{(-, \alpha_{0})}$, which is obtained by making the following diagram commutative
\[
\xymatrix{
0\ar[r]^{} & (-, G_{n})\ar[r]^{(-, d_{n})}\ar[d]_{(-, \alpha_{n})} & \cdots\ar[r] & (-, G_{1})\ar[r]^{(-, d_{1})}\ar[d]_{(-, \alpha_{1})} & (-, G_{0})\ar[r]^{}\ar[d]_{(-, \alpha_{0})} & F_{X}\ar[r]^{}\ar[d]_{\overline{(-, \alpha_{0})}} & 0 \\
0\ar[r]^{} & (-, G_{n}^{\prime})\ar[r]^{(-, d_{n}^{\prime})} & \cdots\ar[r] & (-, G_{1}^{\prime})\ar[r]^{(-, d_{1}^{\prime})} & (-, G_{0}^{\prime})\ar[r]^{} & F_{X^{\prime}}\ar[r]^{} & 0. }
\]
The composition of morphisms will be defined naturally. Clearly, $\F$ is an additive functor.

\vskip 10pt

Denote by $\Mor_{n}(A)$ the morphism category of $A$. It is well-known that there is an equivalence between $T_{n}(A)\mbox{-}{\rm mod}$ and $\Mor_{n}(A)$. Denote by ${\CS}_{n}(A)$ the full subcategory of $\Mor_{n}(A)$ consisting of all monomorphisms.

\vskip 10pt

\begin{lem}\ {\rm (\cite[Corollary 4.1(ii)]{Zh11})} \ Let $A$ be a Gorenstein algebra. Then
$$T_{n}(A)\mbox{-}{\rm Gproj}={\CS}_{n}(A\mbox{-}{\rm Gproj}).$$
\end{lem}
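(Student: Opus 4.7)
The plan is to prove both inclusions separately, arguing by induction on $n$ (the case $n=1$ being vacuous). Throughout, I would use the equivalence $T_n(A)\mbox{-}{\rm mod}\simeq \Mor_n(A)$ which identifies a module with a chain $(X_1\xrightarrow{f_1}X_2\xrightarrow{f_2}\cdots\xrightarrow{f_{n-1}}X_n)$. The first point to spell out is the explicit form of the indecomposable projective $T_n(A)$-modules: for each indecomposable projective $P$ in $A\mbox{-}\proj$ and each $1\le i\le n$, one gets an indecomposable projective $T_n(A)$-module of the shape $(0\to\cdots\to 0\to P\xrightarrow{\mathrm{id}} P\xrightarrow{\mathrm{id}}\cdots\xrightarrow{\mathrm{id}} P)$ with $P$ beginning at position $i$. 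Since $A$ is Gorenstein, $T_n(A)$ is Gorenstein too, so one has the Ext-vanishing description $T_n(A)\mbox{-}\Gproj=\{M\mid \Ext^{i}_{T_n(A)}(M,T_n(A))=0,\,\forall i>0\}$, which I would rely on systematically.

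For the inclusion $\CS_n(A\mbox{-}\Gproj)\subseteq T_n(A)\mbox{-}\Gproj$, I would start with a chain $X_1\hookrightarrow X_2\hookrightarrow\cdots\hookrightarrow X_n$ of monomorphisms with each $X_i$ Gorenstein-projective, pick complete $A$-projective resolutions of each $X_i$, and assemble them levelwise via a Horseshoe-type construction into a doubly infinite complex of $T_n(A)$-projectives mapping to $M=(X_1\to\cdots\to X_n)$. Exactness of this complex in $T_n(A)\mbox{-}{\rm Mod}$ reduces, coordinatewise, to exactness of the $A$-resolutions. To check that the complex stays exact after applying $\Hom_{T_n(A)}(-,Q)$ for an indecomposable projective $Q$ of the shape described above, I would translate this via the explicit description of $Q$ into a condition about $\Hom_A(-,P)$-exactness of the component resolutions (plus kernel data coming from the maps $f_i$); this is where the Ext-vanishing on each $X_i$ and each cokernel plays the central role.

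For the reverse inclusion $T_n(A)\mbox{-}\Gproj\subseteq \CS_n(A\mbox{-}\Gproj)$, I would take $M=(X_1\xrightarrow{f_1}\cdots\xrightarrow{f_{n-1}}X_n)\in T_n(A)\mbox{-}\Gproj$ with complete projective resolution $P^{\bullet}\to M$, and look at the coordinate functors $e_i:T_n(A)\mbox{-}{\rm mod}\to A\mbox{-}{\rm mod}$ together with the cokernel functors $c_i:(X_1\to\cdots\to X_n)\mapsto X_i/\mathrm{Im}(f_{i-1})$. The key facts are that the $e_i$ send $T_n(A)$-projectives to $A$-projectives, while the cokernel of the last projective map in each $P^{\bullet}$ is split at each coordinate (because projectives in $\Mor_n(A)$ split into the standard blocks). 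Applying these coordinate functors yields complete $A$-projective resolutions of each $X_i$, so each $X_i$ is in $A\mbox{-}\Gproj$, and the splitting of the resolution components forces each $f_i$ to be a monomorphism by taking inverse limits of the Horseshoe diagrams, completing the argument. The inductive step can be organized via the recollement $A\mbox{-}{\rm mod}\equiv T_n(A)\mbox{-}{\rm mod}\equiv T_{n-1}(A)\mbox{-}{\rm mod}$ combined with Zhang's recollement of Gorenstein-projective stable categories cited earlier.

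The main obstacle is the Hom-exactness verification in the first inclusion: one must ensure that the total complex built from the Horseshoe stays exact under $\Hom_{T_n(A)}(-,Q)$ for every projective $Q$, and this translates into a delicate statement about cokernels $X_{i+1}/\mathrm{Im}(f_i)$ which need not themselves be Gorenstein-projective in general. Overcoming this requires exploiting that $A$ is Gorenstein, so Ext-vanishing is controlled by projective dimension bounds coming from the finite injective dimension of $_AA$ and $A_A$; this is what lets the argument close despite the cokernels being merely of finite Gorenstein-projective dimension.
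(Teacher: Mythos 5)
The paper gives no proof of this lemma---it is cited verbatim from Zhang \cite[Corollary~4.1(ii)]{Zh11}---so there is no in-paper argument to compare against. However, your proposal rests on a misreading of the statement that makes the first inclusion false. In Zhang's theorem the category $\CS_n(A\mbox{-}{\rm Gproj})$ consists of chains of monomorphisms whose \emph{terms and cokernels} all lie in $A\mbox{-}{\rm Gproj}$; the cokernel condition is part of the definition, not an obstacle to be argued around. You acknowledge that the cokernels ``need not themselves be Gorenstein-projective in general'' and then assert that Gorensteinness of $A$ lets ``the argument close despite the cokernels being merely of finite Gorenstein-projective dimension.'' That cannot work, because under your reading the inclusion $\CS_n(A\mbox{-}{\rm Gproj})\subseteq T_n(A)\mbox{-}{\rm Gproj}$ is simply false.

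Concretely, let $A$ be any hereditary non-semisimple Artin algebra, so that $A\mbox{-}{\rm Gproj}=A\mbox{-}{\rm proj}$ and $T_2(A)\mbox{-}{\rm Gproj}=T_2(A)\mbox{-}{\rm proj}$ (both algebras have finite global dimension). Choose a non-simple indecomposable projective $P_2$ and set $P_1={\rm rad}\,P_2$, which is projective since $A$ is hereditary. The object $(P_1\hookrightarrow P_2)$ of $\Mor_2(A)$ is a monomorphism between Gorenstein-projective $A$-modules, yet its cokernel is the non-projective simple $P_2/{\rm rad}\,P_2$, and $(P_1\hookrightarrow P_2)$ is not a projective $T_2(A)$-module (the indecomposable projectives have only the shapes $(0\to P)$ and $(P\xrightarrow{\mathrm{id}}P)$), hence it is not in $T_2(A)\mbox{-}{\rm Gproj}$. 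This chain lies in your version of $\CS_2(A\mbox{-}{\rm Gproj})$ but outside $T_2(A)\mbox{-}{\rm Gproj}$, so no Horseshoe assembly or ${\rm Ext}$-vanishing estimate will close that gap. Once the cokernel condition is restored, the Hom-exactness check against the staircase projectives $(0\to\cdots\to 0\to P\to\cdots\to P)$ does go through, because the relevant Hom groups are exactly ${\rm Hom}_A$ out of those cokernels, and their Gorenstein-projectivity supplies the required ${\rm Ext}$-vanishing; correspondingly, your reverse inclusion must extract and control the cokernels, not just the coordinate modules.
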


\vskip 10pt

We are ready to give the criterion for $T_{n}(A)$ to be of finite CM-type.

\vskip 5pt

\begin{prop}\
\label{finiteCM}
Let $A$ be an Artin algebra. Then there are equivalence of additive categories for any positive integer $n$
$$\F: {\pGp}^{\leq n}(A)\lxr {\pd}^{\leq n}((A\mbox{-}{\rm Gproj})\mbox{-}{\rm mod}).$$
Consequently, if $A$ is Gorenstein, then $T_{n}(A)$ is of finite CM-type if and only if ${\pd}^{\leq n}((A\mbox{-}{\rm Gproj})\mbox{-}{\rm mod})$ is of finite type.
\end{prop}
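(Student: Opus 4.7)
The plan is to first verify that $\F$ is a well-defined equivalence of additive categories, and then to deduce the CM-finiteness criterion by combining $\F$ with the preceding lemma identifying $T_n(A)\mbox{-}{\rm Gproj}$ with $\CS_n(A\mbox{-}{\rm Gproj})$, together with a correspondence between chains of monomorphisms between Gorenstein-projectives and proper Gorenstein-projective resolutions.

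First I would check that the construction is independent of the chosen proper Gorenstein-projective resolution of $X$. This follows from the comparison theorem for proper resolutions: any two proper Gorenstein-projective resolutions of $X$ are connected by chain maps lifting the identity that are unique up to chain homotopy, and applying $G\mapsto (-,G)$ transports this data to an isomorphism of the associated $F_X$'s in $\cht((A\mbox{-}{\rm Gproj})\mbox{-}{\rm mod})$. The same reasoning applied to a morphism $\alpha\colon X\to X'$ shows that $\F(\alpha)=\overline{(-,\alpha_0)}$ is independent of the chain lift, making $\F$ a well-defined additive functor.

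For essential surjectivity, start with $F\in\pd^{\leq n}((A\mbox{-}{\rm Gproj})\mbox{-}{\rm mod})$ having projective resolution $0\to(-,G_n)\to\cdots\to(-,G_0)\to F\to 0$. The Yoneda lemma identifies each natural transformation $(-,G_i)\to(-,G_{i-1})$ with a morphism $d_i\colon G_i\to G_{i-1}$, and composition gives $d_{i-1}\circ d_i=0$. Put $X:=\mathrm{coker}(d_1)$; evaluating the exact sequence of functors at $G=A$ (a Gorenstein-projective) makes the complex $0\to G_n\to\cdots\to G_0\to X\to 0$ exact as a sequence of $A$-modules, while evaluating at a general $G\in A\mbox{-}{\rm Gproj}$ yields the Hom-exactness required for properness, so $X\in\pGp^{\leq n}(A)$ and $\F(X)\cong F$. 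For full faithfulness, applying $\mathrm{Hom}(-,F_Y)$ to the projective presentation $(-,G_1)\to(-,G_0)\to F_X\to 0$ and invoking Yoneda identifies $\mathrm{Hom}(\F X,\F Y)$ with $\ker(\mathrm{Hom}_A(G_0,Y)\to\mathrm{Hom}_A(G_1,Y))$, which matches $\mathrm{Hom}_{\cht}(X,Y)$ through the cokernel property of $X$ together with the proper resolution of $Y$.

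For the corollary, with $A$ Gorenstein the preceding lemma yields $T_n(A)\mbox{-}{\rm Gproj}=\CS_n(A\mbox{-}{\rm Gproj})$, so the CM-finiteness of $T_n(A)$ is equivalent to $\CS_n(A\mbox{-}{\rm Gproj})$ having finitely many indecomposables. I would then establish a bijection between indecomposables of $\CS_n(A\mbox{-}{\rm Gproj})$ and those of $\pGp^{\leq n}(A)$ by assembling a chain of monomorphisms $G_n\hookrightarrow G_{n-1}\hookrightarrow\cdots\hookrightarrow G_1$ into a proper Gorenstein-projective resolution through successive syzygy extraction, and then transferring to $\pd^{\leq n}((A\mbox{-}{\rm Gproj})\mbox{-}{\rm mod})$ via $\F$. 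The main obstacle I anticipate is precisely this indecomposable-counting step: $\CS_n$ admits ``trivial'' indecomposables coming from chains that collapse to a single indecomposable Gorenstein-projective placed in various positions, and these must be matched against the length-$0$ indecomposables of $\pGp^{\leq n}(A)$; making this accounting precise so that no spurious indecomposables appear on either side is where the argument requires care.
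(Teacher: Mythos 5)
Your argument for the equivalence follows essentially the same route as the paper: both rest on Yoneda's lemma together with the comparison theorem for proper Gorenstein-projective resolutions, and your essential-surjectivity step (evaluate the exact functor sequence at $A$ to recover the $A$-module complex and set $X=\operatorname{coker} d_1$) is precisely the paper's density argument. The one organizational difference is that the paper proves fullness and faithfulness separately and very concretely --- lift $\bar\alpha$ to a chain map of functor resolutions and pull it back along $G\mapsto(-,G)$ for fullness, then pull a null-homotopy of $(-,\alpha_\bullet)$ back to a null-homotopy of $\alpha_\bullet$ for faithfulness --- whereas you compute $\mathrm{Hom}(\F X,\F Y)$ in one pass from the projective presentation of $F_X$; both hinge on the same Yoneda-fullness of $G\mapsto(-,G)$, so this is a stylistic rather than substantive divergence. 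On the ``consequently'' clause, the paper's proof stops after density and offers no argument connecting CM-finiteness of $T_n(A)$ to finite type of $\pd^{\leq n}((A\mbox{-}{\rm Gproj})\mbox{-}{\rm mod})$; it implicitly relies on the preceding lemma $T_n(A)\mbox{-}{\rm Gproj}=\CS_n(A\mbox{-}{\rm Gproj})$ and an unstated passage from chains of monomorphisms to proper Gorenstein-projective resolutions. So the indecomposable-bookkeeping concern you raise at the end is not a defect of your proposal relative to the paper --- it names exactly the step that the paper itself leaves to the reader, and working it out carefully (including where the ``degenerate'' chains land) would in fact strengthen the exposition.
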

\begin{proof}\ We first show that $\F$ is full. Let $X$ and $X^{\prime}$ be in ${\pGp}^{\leq n}(A)$ and $\bar{\alpha}: \F(X)\lxr \F(X^{\prime})$ the morphism in ${\pd}^{\leq n}((A\mbox{-}{\rm Gproj})\mbox{-}{\rm mod})$. Obviously, this morphism can be lifted to the morphism between the projective resolutions of $\F(X)$ and $\F(X^{\prime})$, which are induced by the proper Gorenstein-projective resolutions of $X$ and $X^{\prime}$ respectively. Then by Yoneda's lemma there exists the morphism $\alpha: X\lxr X^{\prime}$ such that $\F(\alpha)=\bar{\alpha}$.

\vskip 5pt

Next we show that $\F$ is faithful. Let $\alpha: X\lxr X^{\prime}$ be a morphism in ${\pGp}^{\leq n}(A)$ such that $\F(\alpha)=0$. This implies that the morphism $\bar{\alpha}: F_{X}\lxr F_{X^{\prime}}$ is null-homotopic. From the following commutative diagram
\[
\xymatrix{
0\ar[r]^{} & (-, G_{n})\ar[r]^{(-, d_{n})}\ar[d]_{(-, \alpha_{n})} & \cdots\ar[r] & (-, G_{1})\ar[r]^{(-, d_{1})}\ar[d]_{(-, \alpha_{1})} & (-, G_{0})\ar[r]^{}\ar[d]_{(-, \alpha_{0})} & F_{X}\ar[r]^{}\ar[d]_{\bar{\alpha}} & 0 \\
0\ar[r]^{} & (-, G_{n}^{\prime})\ar[r]^{(-, d_{n}^{\prime})} & \cdots\ar[r] & (-, G_{1}^{\prime})\ar[r]^{(-, d_{1}^{\prime})} & (-, G_{0}^{\prime})\ar[r]^{} & F_{X^{\prime}}\ar[r]^{} & 0. }
\]
and Yoneda's lemma, there exists a series of morphisms $h_{i}: G_{i}\lxr G_{i+1}^{\prime}$ for $i= 0, 1, \cdots, n-1$ such that $\alpha_{0}=d_{1}^{\prime} h_{0}, \alpha_{i}=h_{i-1}d_{i}+d_{i+1}^{\prime}h_{i}$ and $\alpha_{n}=h_{n-1}d_{n}$ for $i= 1, 2, \cdots, n-1$, as shown in the diagram below:
\[
\xymatrix{
0\ar[r]^{} & G_{n}\ar[r]^{d_{n}}\ar[d]^{\alpha_{n}} & G_{n-1}\ar[r]^{d_{n-1}}\ar[d]^{\alpha_{n-1}}\ar[ld]^{h_{n-1}} & \cdots\ar[r] & G_{2}\ar[r]^{d_{2}}\ar[d]^{\alpha_{2}} & G_{1}\ar[r]^{d_{1}}\ar[d]^{\alpha_{1}}\ar[ld]^{h_{1}} & G_{0}\ar[d]^{\alpha_{0}}\ar[ld]^{h_{0}}\\
0\ar[r]^{} & G_{n}^{\prime}\ar[r]_{d_{n}^{\prime}} & G_{n-1}^{\prime}\ar[r]_{d_{n-1}^{\prime}} & \cdots\ar[r] & G_{2}^{\prime}\ar[r]_{d_{2}^{\prime}} & G_{1}^{\prime}\ar[r]_{d_{1}^{\prime}} & G_{0}^{\prime}.  }
\]
This implies that $\alpha$ is null-homotopic, and hence $\F$ is faithful.

\vskip 5pt

Finally, we show that $\F$ is dense. Let $F\in {\pd}^{\leq n}((A\mbox{-}{\rm Gproj})\mbox{-}{\rm mod})$. Taking a projective resolution of $F$ as follows
$$0\lxr (-, G_{m})\lxr (-, G_{m-1})\lxr \cdots \lxr (-, G_{1})\lxr (-, G_{0})\lxr F\lxr 0$$
with $m\leq n$, we get the exact sequence
$$0\lxr G_{m}\lxr G_{m-1}\lxr \cdots \lxr G_{1}\xrightarrow{d_{0}} G_{0}$$
which is the proper Gorenstein-projective resolution of ${\rm Coker}d_{0}$, and $\F({\rm Coker}d_{0})= F$.
\end{proof}

\vskip 10pt

\begin{thm}\
\label{Go}
Let $A, B$ and $C$ be virtually Gorenstein algebras of finite CM-type, and
\[
\xymatrix@C=0.5cm{
\ K(B\mbox{-}{\rm GP}) \ar[rrr]^{i_{\ast}} \ar @/_3.0pc/[rrr]^{}  &&&
K(A\mbox{-}{\rm GP}) \ar[rrr]^{j^{\ast}}  \ar @/_3.0pc/[rrr]^{}  \ar @/_1.5pc/[lll]_{i^{\ast}}  \ar @/^1.5pc/[lll]_{i^{!}} &&&
K(C\mbox{-}{\rm GP}) \ar @/_1.5pc/[lll]_{j_{!}}  \ar @/^1.5pc/[lll]_{j_{\ast}}
 }
\]
a ladder of recollements of height two. The following are equivalent:
\begin{enumerate}

\item \ The recollement can restrict to a recollement
\[
\xymatrix@C=0.5cm{
\ D_{gp}^{b}(B) \ar[rrr]^{i_{\ast}} &&&
D_{gp}^{b}(A) \ar[rrr]^{j^{\ast}}  \ar @/_1.5pc/[lll]_{i^{\ast}}  \ar @/^1.5pc/[lll]^{i^{!}} &&&
D_{gp}^{b}(C); \ar @/_1.5pc/[lll]_{j_{!}}  \ar @/^1.5pc/[lll]^{j_{\ast}}
 }
\]

\vskip 5pt

\item \ The associated TTF triple $(\im j_{!}, \im i_{\ast}, \im j_{\ast})$ can restrict to $D_{gp}^{b}(A)$.
\end{enumerate}

\vskip 5pt

If either of the above conditions holds, then $A$ is Gorenstein if and only if $B$ and $C$ are also.
\end{thm}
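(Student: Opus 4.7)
The plan is to treat (1)$\Leftrightarrow$(2) as a direct consequence of the Beilinson--Bernstein--Deligne correspondence between recollements and TTF triples, together with the identification $D_{gp}^b(-)\simeq (K(-\mbox{-}{\rm GP}))^c$ for virtually Gorenstein algebras of finite CM-type (as used in the proof of Lemma~\ref{finiteCM}), and then to deduce the final Gorenstein assertion by gluing bounded Gorenstein-projective resolutions through the induced recollement triangles. The direction (1)$\Rightarrow$(2) is immediate: a recollement of bounded Gorenstein derived categories that restricts the $K$-level recollement has as its associated TTF triple precisely $(\im j_!\cap D_{gp}^b(A),\ \im i_*\cap D_{gp}^b(A),\ \im j_*\cap D_{gp}^b(A))$, which by definition is the restriction of the original TTF triple.

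For (2)$\Rightarrow$(1): By the BBD correspondence, the restricted TTF triple in $D_{gp}^b(A)$ produces a recollement whose three distinguished subcategories are the three intersections above. The task is to identify these with $D_{gp}^b(B)$ and $D_{gp}^b(C)$ via the restrictions of $i_*,\ j_!,\ j_*$. Since the ladder has height two, each of these functors admits a further right adjoint, so their right adjoints preserve coproducts and therefore $i_*,\ j_!,\ j_*$ preserve compact objects. Combined with the equivalences $D_{gp}^b(-)\simeq (K(-\mbox{-}{\rm GP}))^c$ and the full faithfulness already available at the $K$-level, this identifies the three distinguished subcategories with $D_{gp}^b(B),\ D_{gp}^b(B),\ D_{gp}^b(C)$ respectively, yielding the desired recollement of bounded Gorenstein derived categories.

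For the Gorenstein assertion: Under (1) or (2) we have a recollement $D_{gp}^b(B)\to D_{gp}^b(A)\to D_{gp}^b(C)$. For a virtually Gorenstein algebra, being Gorenstein is equivalent to every finitely generated module having bounded Gorenstein-projective dimension, i.e.\ to the canonical comparison $D^b(-)\to D_{gp}^b(-)$ being a triangle equivalence. If $B$ and $C$ are Gorenstein, any $M\in A\mbox{-}{\rm mod}$ viewed in $D_{gp}^b(A)$ fits into the recollement triangle $j_!j^*M\to M\to i_*i^*M\to$; both $j^*M$ and $i^*M$ admit bounded GP-resolutions whose lengths are controlled uniformly by the Gorenstein dimensions of $C$ and $B$, and gluing these through the triangle produces a uniform bound on the GP-dimension of $M$, so $A$ is Gorenstein. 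The converse is analogous via $i^*$ and $j^*$, which send bounded GP-resolutions to bounded ones.

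The main obstacle is the Gorenstein part: one must verify that gluing through the recollement triangle produces a \emph{uniform} bound on Gorenstein-projective dimensions, independent of $M$. This depends on the uniform bound from Gorensteinness of $B$ and $C$, on the height-two ladder hypothesis ensuring the necessary adjoints (so that $i^*,\ j^*$ restrict properly to the bounded Gorenstein derived level), and crucially on the virtually Gorenstein hypothesis that equates finite Gorenstein-projective dimension with the algebra being Gorenstein.
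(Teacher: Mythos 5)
Your treatment of the equivalence (1)$\Leftrightarrow$(2) uses essentially the same key fact as the paper---namely that for virtually Gorenstein algebras of finite CM-type the compact objects of $K(-\mbox{-}{\rm GP})$ are exactly $D_{gp}^b(-)$, via \cite[Theorem 3.2]{G15}---but where the paper simply invokes \cite[Proposition 3.13]{SZ} to convert that identification into the stated equivalence, you try to re-derive it by hand. In doing so you introduce an unnecessary and partly incorrect claim: you assert that, because the ladder has height two, each of $i_*$, $j_!$, $j_*$ admits a further right adjoint whose right adjoint preserves coproducts. For $j_*$ this would require the extra right adjoint $j$ of $j_*$ supplied by the second row to itself have a right adjoint, which a height-two ladder does not provide. (For $i_*$ and $j_!$ one needs no ladder at all, since $i^!$ and $j^*$ are already coproduct-preserving right adjoints in a plain recollement.) Fortunately this detour is not needed: once you know the TTF triple restricts to $D_{gp}^b(A)$, the equivalence with (1) is precisely the content of \cite[Proposition 3.13]{SZ}, which is what the paper uses.

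The genuine gap is in the final Gorenstein assertion. The paper disposes of it in one line by quoting Theorem~3.5 of \cite{G17}, which is tailor-made for recollements of bounded Gorenstein derived categories. Your replacement argument---gluing bounded Gorenstein-projective resolutions of $j^*M$ and $i^*M$ through the recollement triangle $j_!j^*M\to M\to i_*i^*M\to$---is a sketch of the right idea, but it is not complete, and you yourself flag the missing step. Two concrete problems: first, $j^*M$ and $i^*M$ live in $D_{gp}^b(C)$ and $D_{gp}^b(B)$ as complexes, not as modules in a single degree, so a statement about ``bounded GP-resolutions whose lengths are controlled uniformly'' needs a precise formulation of Gorenstein-projective dimension for complexes and a proof that it is bounded across cohomological degrees; second, $j_!$ in a recollement is typically the worst-behaved of the six functors with respect to boundedness, and you give no reason why $j_!$ carries the bounded complex $j^*M$ back to something of uniformly bounded GP-dimension in $D_{gp}^b(A)$. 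Without resolving both points (and the corresponding uniformity in the converse via $i^*$ and $j^*$), the Gorenstein direction of the statement is unproved. Citing \cite[Theorem 3.5]{G17}, as the paper does, is the cleanest way to close this.
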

\begin{proof}\ Since $A, B$ and $C$ are virtually Gorenstein algebras of finite CM-type, we get from \cite[Theorem 3.2]{G15} that three homotopy categories $K(A\mbox{-}{\rm GP}), \ K(B\mbox{-}{\rm GP})$ and $K(C\mbox{-}{\rm GP})$ are compactly generated, and moreover,
$$(K(A\mbox{-}{\rm GP}))^{c}\cong D_{gp}^{b}(A), \ \ \ (K(B\mbox{-}{\rm GP}))^{c}\cong D_{gp}^{b}(B)$$
and
$$(K(C\mbox{-}{\rm GP}))^{c}\cong D_{gp}^{b}(C).$$
This means the equivalence of the two conditions by \cite[Proposition 3.13]{SZ}.

\vskip 10pt

Under the condition, Theorem 3.5 in \cite{G17} tells us that $A$ is Gorenstein if and only if so are $B$ and $C$.
\end{proof}

\vskip 20pt

\end{document}